\numberwithin{equation}{section}
\theoremstyle{plain}
\newtheorem{mydef}{Definition}
\newcommand{\cl}[1]{\mathcal{#1}}
\newtheorem{thm}{Theorem}
\newtheorem{lem}{Lemma}
\newtheorem{cor}{Corollary}
\newtheorem{prop}{Proposition}
\newtheorem{examp}{Example}
\theoremstyle{remark}
\theoremstyle{remark}\newtheorem{rem}{Remark}
\theoremstyle{remark}\newtheorem{notat}{Notation}
\def\n{\noindent}
\def\Ex{\mathbb{E}}
\def\notin{\epsilon \hspace{-0.37em}/}
  \def\rd{\color{red}}
\newcommand{\E}{\ensuremath{\mathbb E}}
\date{}
\subjclass[2010]{Primary 60F05; Secondary 60F17, 62E20}
\keywords{half-space depth, simplicial depth, functional depth, consistency}
\begin{document}

\title{Concerns with Functional Depth}
\author{James Kuelbs} 
\address{James Kuelbs\\Department of Mathematics,  University of Wisconsin, Madison, WI 53706-1388}
\email{kuelbs@math.wisc.edu}
\author{Joel Zinn}
\address{\noindent Joel Zinn\\Department of Mathematics, Texas A\&M University, College Station, TX 77843-3368}
\email{jzinn@math.tamu.edu}

\begin{abstract}  We study some problems inherent with certain forms of functional depth, in particular, zero depth and lack of consistency. 
\end{abstract}

\maketitle

\section{Introduction}

The use of a variety of depth functions to provide a center-outward ordering of finite dimensional data is well established, and more recently a number of interesting papers have considered analogues of such depths to study multivariate data in the infinite dimensional setting. These depths apply to data given in terms of infinite sequences, as  functions defined on some interval, and also as points in some infinite dimensional  Banach space. The papers \cite{lp-r-concept}, \cite{lp-r-half}, \cite{dutta-tukey}, \cite{chak-chaud-12}, \cite{mosler-poly-funct-depth}, and \cite{cuevas-fraiman-dual} introduce interesting  examples of such depths, and also include many additional references, but the focus here centers on the problem that the natural analogue of some commonly used depths in $\mathbb{R}^d$ may well be zero "most of the time" in the infinite dimensional setting. By "most of the time" it is meant that the depth is zero on a set whose probability is one with respect to 
 the prob
 ability the depth is based on. This was pointed out in Theorem 3 of \cite{dutta-tukey} for Tukey's half-space depth with respect to certain probability laws on the Hilbert space $\ell_2$, and also for the band depth and half-region depth of \cite{lp-r-concept}, \cite{lp-r-half} in \cite{chak-chaud-12}. The paper \cite{kz-half-region} also examined this problem for half-region depth showing it not only vanishes most of the time, but in many examples it vanishes everywhere. Moreover, in \cite{kz-half-region} it is also shown how one can smooth such data so as to regain positive half-region depth, and then establish consistency for the empirical half-region depth of the smoothed data. In some cases one can also show $\sqrt{n}$ consistency.  In a related, but slightly different context, smoothing was used to guarantee various limit theorems (see \cite{kkz}, \cite{kz-quant}).  
\bigskip

In the present work our goal is to better understand both the problem of zero depth as well as questions of consistency for infinite dimensional data. Our results are in the context of Tukey half-space depth, and also for the simplicial depth in the infinite dimensional setting obtained as in \cite{mosler-poly-funct-depth}. Of course, there are other possible choices, but the importance of our choices in the finite dimensional setting made them attractive to study for functional data.  Moreover, it is clear that for data symmetric about zero, the half-space depth of the zero vector will typically be $\frac{1}{2}$, but the result in \cite{dutta-tukey} shows this depth may also be zero with probability one. Hence an immediate question to ask is for what other points might this depth be strictly positive? Also, perhaps the set of points at which the depth is non-zero is ``special'' and  one can prove consistency for these special points. 

In this paper, in a number of cases, we'll describe the precise set of points at which the half-space depth is strictly positive and when it is zero. This is accomplished via Theorems \ref{necessity-zero-depth} and \ref{nasc-indep} of section \ref{inf-dim}, and we also provide some explicit formulas for half-space depth in special cases in section \ref{examples}. Theorem \ref{emp-depth} of section \ref{inf-dim} shows that in many situations the empirical half-space depth is zero with probability one. Combined with Theorems \ref{necessity-zero-depth} and \ref{nasc-indep}, this last result is particularly bad news, as often the empirical depth is zero with probability one at all the points where the true depth is strictly positive. Of course, if the true half-space depth is zero, in this case  we would have consistency, but very little information. Also, the remark following the statement of Theorem 3 below points out several aspects of such probl
 ems, and
  how the results 
 of Theorem 3 differ from those on Tukey functional depth on  page  eleven of \cite{mosler-poly-funct-depth}.

Proposition \ref{gaussian} of section \ref{examples}  shows how these  results combine for Gaussian measures with infinite dimensional support on a separable Banach space, and, although we can identify the precise set of positivity of the half space depth based on $B^{*}$, there 
still is a lack of consistency. In the last part of section \ref{examples} we see the simplicial depth of Liu (see \cite{liu-simplicial-88} and \cite{liu}) extended to $\mathbb{R}^{\infty}$ using definition one in section 7 of \cite{mosler-poly-funct-depth} is also subject to the very same problems. This is an interesting fact in its own right, but also because this depth is quite different (see \cite{serfling-zuo-notions}) than Tukey's half-space depth. It is also interesting to note that the context in which zero empirical depth appears for these particular depths seems to occur when there are a lot of independence-like properties in the data. Hence, it may be possible to modify or smooth either the random variable or the depth to ensure consistency or even a central limit type theorem, but that is off in the future.
 
There are several other interesting aspects of this study that should be mentioned. These include the fact that the results of Theorem \ref{nasc-indep} connect with admissible translates of probability measures on $\mathbb{R}^{\infty}$ (see \cite{kakutani-prod-equiv}, \cite{shepp-admissible}), and in subsection \ref{rad-case}  we need some delicate tail estimates of Rademacher series to estimate the magnitude of the actual half-space depth at a point (see \cite{monty-rad-sums}). 

Finally, it seems to be clear that each example of functional depth brings with it its own difficulties. Some may be more immune to 
various difficulties than others. This can be seen in the following example of the dual integrated depth of Cuevas and Fraiman \cite{cuevas-fraiman-dual}, where positivity of the depth holds, and we also have an immediate link to positivity of the modified band depth of Lopez-Pintado and Romo \cite{lp-r-concept}. We start with the one-dimensional version of the band depth of Lopez-Pintado and Romo \cite{lp-r-concept}. That is, if $r\ge 2$ and for $\{\xi_{j}\}_{j=1}^{r}$ iid with distribution $\mu$,
\[BD_{r,1}(b,\mu)=P(\min_{1\le j\le r}\xi_{j}\le b\le \max_{1\le j\le r}\xi_{j}).
\]
Of course, $BD_{2,1}$
is the univariate version of the simplicial depth,  and 
$BD_{2,1}(b,\mu)\le BD_{r,1}(b,\mu)$.
Hence, it is positive provided $b$ is in the interior of the closed convex hull of the support of $\mu$, or at a boundary point, if the boundary point has positive $\mu$-probability.
Assume that $Q$ is  the measure on the set of point evaluations, $e_{t}(a)=a(t)$, say for $a$ in $C[0,1],$ given by Lebesgue measure, $m$, on $[0,1]$. (For $A$ a Borel subset in the weak-star topology of $C^{*}[0,1]$, the dual space of $C[0,1]$, define $Q(A)=m(t\in[0,1]\colon e_{t}\in A)$). Now, consider a stochastic process, $\{X(t): t\in [0,1]\}$ with distribution, $P$, on $C[0,1]$ and i.i.d. copies, $\{X_{j}(t): t\in [0,1]\}_{j=1}^{\infty}$, and a function, $a \in C[0,1]$. Then the definition of the 
dual integrated depth associated with the depth $BD_{r,1}$ gives 
\begin{align}\label{dbd-mbd}&IDD(a,P)=\int_{0}^{1}P(\min_{1\le j\le r}X_{j}(t)\le a(t)\le \max_{1\le j\le r}X_{j}(t))\ dt\\
&=(\text{by Fubini})\ \Ex\ [m(t\in [0,1]: \min_{1\le j\le r}X_{j}(t)\le a(t)\le \max_{1\le j\le r}X_{j}(t))]\notag\\
&= MBD_{r,1}(a, P), \notag
\end{align}
where $MBD_{r,1}(a, P)$ is the modified band depth of Lopez-Pintado and Romo \cite{lp-r-concept}.
In particular, the positivity of the integrand in (\ref{dbd-mbd}) on some open subinterval of $[0,1]$ implies that for each such $a$, the quantities in (\ref{dbd-mbd}) are positive. For example, if $s_{1,t} = \inf\{x:P(X(t) \le x)>0\}$ and $s_{2,t}=\sup\{x:P(X(t) \le x)<1\}$ for $t \in [0,1]$, and $E=\{t \in [0,1]: s_{1,t}= -\infty, s_{2,t}= \infty\}$ has positive Lebesgue measure, then for all $a \in C[0,1]$ and $t \in E$ the quantity
$$
P(\min_{1\le j\le r}X_{j}(t)\le a(t)\le \max_{1\le j\le r}X_{j}(t))
$$
is strictly positive. Also, one should see Theorem 2 in \cite{cuevas-fraiman-dual} on the consistency of IDD and  \cite{lp-r-concept} for consistency results for the (unmodified) band depth.

\section{ Infinite dimensional half space depth.} \label{inf-dim}

Here we formulate some results on half space depth in infinite dimensional, real, topological vector spaces $B$, whose topology is metrizable, complete and separable via a translation invariant metric. They are the so-called F-spaces in \cite{rudin}, and include the real separable Banach spaces, as well as Fr\'echet spaces such as $\mathbb{R}^{\infty},$ and many other topological vector spaces.
\bigskip

Throughout $X,X_1,X_2,\ldots$ are i.i.d. $B$-valued random vectors on the probability space $(\Omega, \mathcal{F},P)$ which are measurable from $\mathcal{F}$ to the Borel sets $\mathcal{B}_B$ of $B$, and 
$\mu$ denotes the law of $X$ on $(B, \mathcal{B}_B)$.  We also assume $\mathcal{T}$ is a collection of  Borel measurable functionals on $B$. Then, we define the half space depth of $a \in B$ with respect to $\mathcal{T}$ and $\mu$ to be
\begin{align}\label{hsd}
HD_{\mathcal{T}}(a,\mu)=\inf_{t \in \mathcal{T}} P(\omega \in \Omega: t(X(\omega)) \ge t(a)). 
\end{align}

We usually denote the right hand term by $\inf_{t \in \mathcal{T}} P( t(X) \ge t(a))$, and observe that we also have
\begin{align}
HD_{\mathcal{T}}(a,\mu)= \inf_{t \in \mathcal{T}} \mu(x \in B: t(x) \ge t(a)). 
\end{align}
If $\mu_{n}(\omega)= \frac{1}{n} \sum_{j=1}^n \delta_{X_j(\omega)}, n \ge 1,$ then for each $\omega \in \Omega$  we have $\mu_{n}$ a probability measure on $B$ and the empirical half space depth for $a \in B$ with respect to $\mathcal{T}$ and $\mu_{n}$ is defined to be
\begin{align}\label{ehsd}
HD_{\mathcal{T}}(a,\mu_{n})= \inf_{t \in \mathcal{T}} \mu_{n}(x \in B: t(x) \ge t(a)),  
\end{align}
and hence we also have
\begin{align}\label{ehsd'}
HD_{\mathcal{T}}(a,\mu_{n})= \inf_{t \in \mathcal{T}}\frac{1}{n} \sum_{j=1}^n I(t(X_j) \ge t(a)).  
\end{align}

If $B = \mathbb{R}^d$ and $\mathcal{T}$ is the linear functionals on $\mathbb{R}^d$ in (\ref{hsd}), this is Tukey half space depth. Of course, since the linear functionals on $\mathbb{R}^d$ are given by inner products, we denote this by $HD_{\mathbb{R}^d}(a,\mu),$ and observe that 
\begin{align*} 
HD_{\mathbb{R}^d}(a,\mu)=  \inf_{t \in \mathbb{R}^d} P( t(X) \ge t(a))= \inf_{t \in \mathbb{R}^d } \mu(x \in \mathbb{R}^d: t(x) \ge t(a)).
\end{align*}
When $B$ is a real separable Banach or Fr\'echet space, with the dual space of continuous linear functionals on $B$ denoted by $B^*,$ then a natural definition of Tukey half space depth with  $\mathcal{T} = B^{*}$
is given by 
\begin{align*} 
HD_{B^*}(a,\mu)= \inf_{t \in B^{*}} P( t(X) \ge t(a)). 
\end{align*}
It should be observed that in the literature the half space depth we defined on $\mathbb{R}^d$ or $B$ is likely to be written as $HD_\Ex(a,\mu)$ where $E$ is  $\mathbb{R}^d$ or $B$, respectively. We chose our terminology to emphasize that the functionals in $\mathcal{T}$ need not be continuous or linear on $B$.
\bigskip

The point to be seen here is that in the infinite dimensional setting the class $B^{*}$ is frequently much too large to provide positive depth on much of the space $B$. Moreover, problems of consistency emerge even if the half space depth is positive at a point and $\mathcal{T}$ is countably infinite. Hence it is useful to formulate depth as in (\ref{hsd}) where the class of functionals $\mathcal{T}$ allows more flexibility.
It is also important to note that the functionals $t \in \mathcal{T}$ need not be linear or continuous on $B$, and there are good reasons for this. First, in the generality we are considering there are examples, such as $L^{p}$ for $0<p<1$, where the only continuous linear functional on 
$B$ will be the functional that is identically zero,  and of more importance, the functionals of interest need not be linear to start with. For example, in section {\rd \ref{sec:MP}} we show a lack of consistency for the Mosler-Polyakova version of Liu's simplicial depth using maps that are probabilities (which are highly non-linear functions of the data).
\bigskip

\begin{examp} To see the effect that different choices of $\mathcal{T}$ can have on half-space depth, let $X$ take values in $C[0,1]$, the space of continuous functions on $[0,1]$, where $\mathcal{L}(X(s))=N(0,1+s)$ and $E(X(s)X(t))=1+\min\{s,t\}$ for $s,t \in (0,1)$.
Then, $\{X(s): 0 \le s \le 1\}$ is a standard sample continuous Brownian motion started randomly at time zero with a $N(0,1)$ distribution, and if $\mathcal{T}$ consists of the evaluation maps
$$
e_{t}(a)=a(t), t \in [0,1], a \in C[0,1],
$$
we have
$$
HD(a,\mu) =\inf_{t \in [0,1]}P(X(t) \ge a(t)),
$$
where $\mu$ is the law of $X$ on $C[0,1]$.
Therefore,
$$
HD(a,\mu) =\inf_{t \in [0,1]}P(\frac{X(t)}{\sqrt{1+t}} \ge \frac{a(t)}{\sqrt{1+t}})=\inf_{t \in [0,1]}[1-\Phi( \frac{a(t)}{\sqrt{1+t}})],
$$
and since $\sup_{t \in [0,1]}\frac{|a(t)|}{\sqrt{1+t}} < \infty$ for all $a \in C[0,1]$ we have
$HD(a,\mu) >0$
for all $a \in C[0,1]$.
On the other hand, if $\mathcal{T}$ consists of the differences of two evaluation maps, then as we will now see the half-space depth will be zero with $\mu$ probability one. In fact, we need not consider all differences, but only that $\mathcal{T}$ consists of the sequence of differences   
$$
\theta_k(a)\equiv a(\frac{1}{k})-a(\frac{1}{k+1}), k \ge 1.
$$
Then, the half-space depth of a function $a \in C[0,1]$ with respect to $\mu$ and this choice of $\mathcal{T}$ is
$$
HD(a,\mu) =\inf_{k \ge 1}P(\theta_k(X) \ge \theta_k(a)).
$$
Now $G_k=\sqrt {k(k+1)}\theta_k(X), k \ge 1,$ are i.i.d. $N(0,1)$ random variables and hence for $a \in C[0,1]$
$$
HD(a,\mu) =\inf_{k \ge 1}P(G_k \ge \sqrt{k(k+1)}\theta_k(a))= 1-  \Phi(\sup_{k \ge 1} \sqrt{k(k+1)}\theta_k(a)).
$$
If 
$$
A= \{ a \in C[0,1]:  \sup_{k \ge 1}\sqrt{k(k+1)}\theta_k(a)=\infty\},
$$
then $\mu(A) = P(\sup_{k \ge 1}G_{k})=1$, and hence with $\mu$-probability one $HD(a,\mu) =0$ when $\mathcal{T}$ consists of these differences.
\end{examp}

Now we turn to three theorems and the additional notation used in the remainder of the paper. The first theorem obtains sufficient conditions for half space depth to be zero in the infinite dimensional setting, and following its statement there are a couple of remarks indicating how Theorem 3 of \cite{dutta-tukey} for half space depth in $\ell_2$ follows as a special case. These remarks also examine other aspects of the theorem.  A second theorem, when combined with Theorem \ref{necessity-zero-depth}, establishes necessary and sufficient conditions that the depth be positive, and the third examines when the empirical version of this depth in (\ref{ehsd}) and (\ref{ehsd'}) approximates the true distributional depth. There are also corollaries and remarks pertaining to these results, which indicate 
how they fit together. The proofs of the theorems appear at the end of the section.

\begin{thm}\label{necessity-zero-depth}  Let $X$ be a random vector with values in a real separable F-space $B$, and assume the
functionals $\{t_k: k\ge 1\}$ are measurable from the Borel subsets of $B$ to the reals. In addition, assume $\{t_k(X): k \ge 1\}$ are mean zero random variables with variances $\sigma_k^2 \in (0, \infty), k \ge 1,$ 
and that $\Ex(t_i(X)t_j(X))=0, 1 \le i<j<\infty$. If $\mathcal{T}$ consists of all finite linear combinations of the maps $\{t_k: k \ge 1\}$, then  
\begin{align}\label{thm1-conclusion}
HD_{\mathcal{T}}(a,\mu)= 0
\end{align}
for all $a \in B$ such that
\begin{align}\label{thm1series-hyp}
\sum_{k \ge 1} \frac{t_k^2(a)}{\sigma_k^2} = \infty. 
\end{align}
Furthermore, (\ref{thm1-conclusion}) holds with $\mu$ probability one if 
\begin{align}\label{thm1-weak-series-hyp}
P(\sum_{k\ge1} t_k^2( X)/\sigma_k^2 =\infty)= 1.
\end{align}
\end{thm}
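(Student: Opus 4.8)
The plan is to establish the quantitative statement (\ref{thm1-conclusion}) for a fixed $a$ satisfying (\ref{thm1series-hyp}) first, and then to read off the almost-sure statement as an immediate consequence by applying that result to the random point $a=X(\omega)$. For the first part, since $HD_{\mathcal{T}}(a,\mu)=\inf_{t\in\mathcal{T}}P(t(X)\ge t(a))\ge 0$ automatically, it suffices to exhibit one sequence $\{\tau_n\}\subset\mathcal{T}$ along which $P(\tau_n(X)\ge\tau_n(a))\to 0$. The guiding idea is to pick coefficients making the ``signal'' $\tau_n(a)$ large relative to the ``noise'' $\sqrt{\mathrm{Var}(\tau_n(X))}$, and then apply Chebyshev's inequality. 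Concretely I would take
\[
\tau_n=\sum_{k=1}^n \frac{t_k(a)}{\sigma_k^2}\,t_k\in\mathcal{T},\qquad S_n:=\sum_{k=1}^n\frac{t_k^2(a)}{\sigma_k^2}.
\]
Then $\tau_n(a)=S_n$, and because the $t_k(X)$ are mean zero and pairwise uncorrelated the cross terms vanish, giving $\mathrm{Var}(\tau_n(X))=\sum_{k=1}^n (t_k(a)/\sigma_k^2)^2\sigma_k^2=S_n$.

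With these two identities the estimate is short. For $n$ large enough that $S_n>0$ (such $n$ exist since (\ref{thm1series-hyp}) diverges) the variable $\tau_n(X)$ has mean zero, so Chebyshev's inequality yields
\[
P(\tau_n(X)\ge\tau_n(a))\le P(|\tau_n(X)|\ge S_n)\le\frac{\mathrm{Var}(\tau_n(X))}{S_n^2}=\frac{1}{S_n}.
\]
Since (\ref{thm1series-hyp}) forces $S_n\to\infty$, the right side tends to zero, whence $HD_{\mathcal{T}}(a,\mu)\le\inf_n P(\tau_n(X)\ge\tau_n(a))=0$, and combined with nonnegativity this gives (\ref{thm1-conclusion}). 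I would also note that this choice of $\tau_n$ is not ad hoc: a Cauchy--Schwarz argument shows that for a finite linear combination $t=\sum_{k=1}^n c_k t_k$ the signal-to-noise ratio $t(a)^2/\mathrm{Var}(t(X))$ is maximized, with value $\sum_{k=1}^n t_k^2(a)/\sigma_k^2$, exactly at $c_k=t_k(a)/\sigma_k^2$, so no other selection of functionals could do essentially better.

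For the ``furthermore,'' the first part has already done the real work. Hypothesis (\ref{thm1-weak-series-hyp}) says precisely that $X(\omega)$ lands, with $P$-probability one, in the set $A=\{a\in B:\sum_{k\ge 1}t_k^2(a)/\sigma_k^2=\infty\}$, i.e.\ $\mu(A)=1$. By the first part $A\subset\{a\in B:HD_{\mathcal{T}}(a,\mu)=0\}$, so the latter set also has $\mu$-measure one, which is the claimed conclusion. The only point deserving a word is measurability: since each $t_k$ is Borel measurable on $B$, the partial sums $\sum_{k=1}^n t_k^2(\cdot)/\sigma_k^2$ are Borel functions, their increasing limit is Borel measurable, and hence $A$ is a Borel set, so both the hypothesis and the conclusion are meaningful.

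I expect the only genuine idea in the argument to be spotting the optimal combination $c_k=t_k(a)/\sigma_k^2$ (equivalently, carrying out the Cauchy--Schwarz optimization of the signal-to-noise ratio); everything after that is a one-line Chebyshev bound, and the almost-sure statement is a formal consequence. The mild technical obstacle is the measurability bookkeeping for $A$ in the second part, which is routine given the standing assumption that the $t_k$ are Borel measurable.
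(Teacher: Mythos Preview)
Your proof is correct and essentially identical to the paper's: both select the coefficients $\alpha_k=t_k(a)/\sigma_k^2$ (for $k\le n$) and apply the second-moment bound $P(t_\alpha(X)\ge t_\alpha(a))\le \Ex(t_\alpha^2(X))/t_\alpha^2(a)$, which collapses to $1/S_n$, and both dispatch the ``furthermore'' as an immediate consequence. Your added Cauchy--Schwarz remark explaining the optimality of this choice and the measurability check for $A$ are nice expository touches not spelled out in the paper, but the underlying argument is the same.
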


Since $\mathcal{T}$ contains the finite linear combinations of the $\{t_k: k \ge 1\}$ it will be convenient to parameterize these functionals  by letting $\ell_0$
denote the sequences in $\mathbb{R}^{\infty}$ which have only finitely many non-zero terms, and for $\alpha=(\alpha_1,\alpha_2,\ldots) \in \ell_0$ let
\begin{align}\label{talpha}
t_{\alpha} = \sum_{k\ge 1} \alpha_kt_k. 
\end{align}
Then, the right hand term in (\ref{talpha}) is a finite sum, $t_{\alpha}$ is a typical functional in $\mathcal{T}$, and the half space depth of $a \in B$ with respect to $\mathcal{T}$ and $\mu$ satisfies
\begin{align}\label{hsdell0}
HD_{\mathcal{T}}(a,\mu)= \inf_{\alpha \in \ell_0} P( t_{\alpha}(X) \ge t_{\alpha}(a)). 
\end{align}
\bigskip

\begin{rem} Let $\{t_k(X): k\ge 1\}$ be as in Theorem \ref{necessity-zero-depth}. Hence, if for some sequence $\{a_n:n\geq 1\}$ increasing to infinity, we have 
\begin{align}\label{stability}
\limsup_{n \rightarrow \infty} \frac{1}{a_n}\sum_{k=1}^n t_k^2(X)/\sigma_k^2 >0
\end{align}
with probability one, then (\ref{thm1-weak-series-hyp}) holds and the final conclusion of Theorem 1 implies
 (\ref{thm1-conclusion}) with $\mu$-probability one. Futhermore, since $\Ex(t_k^2(X)/\sigma_k^2)=1$ for $k \ge1,$ if $a_n=n$ the stability result in (\ref{stability}) would immediately hold from the ergodic theorem if the sequence $\{t_k^2(X)/\sigma_k^2\}$ is stationary and ergodic. It also follows without the ergodicity assumption provided we have stationarity and $P(t_1(X)=0) =0$. Of course, if the random variables $\{t_k(X): k\ge 1\}$ are assumed independent, then (\ref{stability}) holds  with $a_n=n$ and limit one in a variety of situations by applying a law of large numbers. For example, under the independence assumption and that 
\begin{align}\label{duttaell4}
\sum_{k\ge1}\frac{\Ex(t_k^4(X))}{k^2 \sigma_k^4} < \infty, 
\end{align} 
this is the case. However, the condition (\ref{stability}) also follows with $a_n=n$ and the limit being one provided (\ref{duttaell4}) holds and that $\{t_k^2(X)/\sigma_k^2: k \ge 1\}$ are uncorrelated. That is, under these conditions 
it is easy to check that
\begin{align}\label{wlln-stationary}
\Ex([\frac{1}{n}\sum_{k=1}^n\frac{t_k^2(X)}{\sigma_k^2} - 1]^2) =\frac{1}{n^2} \sum_{k=1}^n \Ex(\frac{t_k^4(X)}{\sigma_k^4}) - \frac{1}{n}. 
\end{align}
Hence, (\ref{duttaell4}) and Kronecker's Lemma combine to imply
\begin{align}\label{stability^{2}}
\lim_{n \rightarrow \infty}\frac{1}{n^2} \sum_{k=1}^n \Ex(\frac{t_k^4(X)}{\sigma_k^4})=0.
\end{align}
Therefore,
$\frac{1}{n}\sum_{k=1}^n\frac{t_k^2(X)}{\sigma_k^2}$ converges in $L^2$  to one, and (\ref{stability}) holds with $a_n=n$ and limit one with probability one.
\end{rem}
\begin{rem} If $X$ takes values in the real separable Banach space $\ell_2$, then Theorem 3 of \cite{dutta-tukey} shows that under certain conditions on the distribution of $X$ the Tukey half space depth with $\mathcal{T} = \ell_2^{*}$ is zero. In that result the maps $t_k(X)$ are assumed to be mean zero independent random variables  such that $t_k(X)= \langle X,e_k\rangle$,where $\langle \cdot,\cdot \rangle$ is the inner product on $\ell_2$ and $\{e_k: k \ge 1\}$ is the canonical basis of $\ell_2$. Furthermore, (\ref{duttaell4}) is assumed to hold, and the half space depth is defined in terms of all continuous linear functionals on $\ell_2$. In our terminology, the finite linear combinations of the $\{t_k(X): k \ge 1\}$ we denote by $\{t_{\alpha}: \alpha \in \ell_0\}$ would be replaced by $\{t_{\alpha}: \alpha \in \ell_2\}$, and hence that depth is less than or equal the depth we use. Since zero is the minimal possible depth, our result in Proposition 1 therefore implies the
  result in \cite{dutta-tukey}. Moreover, it implies similar results in any F-space $B$ without an independence or a linearity assumption on the mappings $\{t_k: k\ge 1\}$.
\end{rem}

If $X$ is symmetric about the vector $a \in B$ and the maps $\{t_k: k \ge 1\}$ are linear, then for each $\alpha \in \ell_0$ we have $P(t_{\alpha}(X) \ge t_{\alpha}(a))\ge 1/2$, and hence 
$HD_{\mathcal{T}}(a,\mu)  \ge 1/2$. Furthermore, it will equal $1/2$ if $P(t_k(X)=t_k(a))=0$ for all $k \ge 1$. Thus  certain vectors have positive half space depth, and our next proposition examines for which vectors in $B$ this might be the case. However, in order to provide sufficient conditions for positive half space depth we require some additional assumptions. They are:
\bigskip

{\bf Assumptions.}
\noindent (A-I) For $a \in B$ 
and all integers $d \ge 1$ 
\begin{align}\label{A-I(i)}
HD_{\mathbb{R}^d}(\Pi_d(a),\mu^{\Pi_d})>0,  
\end{align}
where $\Pi_d(a)=(t_1(a),\ldots,t_d(a))$ and $\mu^{\Pi_d}$ is the image of $\mu$ on $\mathbb{R}^d$ via the map $\Pi_d(\cdot): B \rightarrow \mathbb{R}^d$,
\par \bigskip
\noindent (A-II) For some constant $c < \infty$, $\Ex(t_k^4(X))\le  c[\Ex(t_k^2(X))]^2$ for all $k \ge 1,$ 

\par\bigskip
\noindent and
\bigskip
 
\noindent (A-III) $\{t_k(X)/\lambda_k: k \ge 1\}$, $0<\lambda_k<\infty,$ are i.i.d. with probability density $\phi$ that is positive a.s., (locally) absolutely continuous on $\mathbb{R}$ (i.e., $\phi$ is absolutely continuous on every compact interval of $\mathbb{R}$) satisfying 
\begin{align}\label{FisherInformation}
\mathcal{I}(\phi)=\int_{\mathbb{R}} \frac{(\phi')^2(x)}{\phi(x)}dx < \infty, 
\end{align}
and 
\begin{align}\label{phi-var}
\sigma^2 = \int_{\mathbb{R}} x^2 \phi(x)dx < \infty. 
\end{align}
\bigskip

\begin{rem}  The condition (\ref{A-I(i)}) may be difficult to check in some situations, but it is a necessary condition for $HD_{\mathcal{T}}(a,\mu)$ to be strictly positive since 
\begin{align*}
HD_{\mathbb{R}^d}(\Pi_d(a),P^{\Pi_d})= \inf_{ \alpha \in \ell_0} P(\sum_{k=1}^d \alpha_kt_k(X) \ge \sum_{k=1}^d \alpha_kt_k(a)) 
\end{align*}
\begin{align*}
\ge \inf_{\alpha \in \ell_0} P(\sum_{k \ge 1}\alpha_k t_k(X) \ge \sum_{k \ge 1}\alpha_k t_k(a))=HD_{\mathcal{T}}(a,\mu).
\end{align*}

\n The quantity in (\ref{FisherInformation}) is often called the Fisher information. It appeared in \cite{Fisher73} and was used in \cite{shepp-admissible} in connection with admissible translates (see Definition \ref{AdmTrans} below).  While the uses of the Fisher information are ubiquitous in Statistics, at this point we only use the connection to admissible translates. Furthermore, if $\mu$ is a probability measure on $\mathbb{R}^d$ with probability density that is strictly positive a.s. with respect to Lebesgue measure, then every vector $b \in \mathbb{R}^d$ is an admissible translate. Therefore, by the proof of Lemma \ref{move-R-infty} every vector in $\mathbb{R}^d$ has positive half space depth with respect to $\mu$ for $\mu$ symmetric, and under these conditions (\ref{A-I(i)}) holds.
In addition, the conclusion of Theorem \ref{nasc-indep} given by the assumptions in (A-III) would then follow from the assumptions in (A-I) and (A-II) provided we also assume $\int_{\mathbb{R}} x^4\phi(x)dx<\infty.$
Of course, the conditions in (A-I) and (A-II) do not require that $\{t_k(X): k \ge 1\}$ have densities, and they apply without the  $\{t_k(X): k \ge 1\}$ being scaled  i.i.d. random variables. Hence, in that sense they are more general than what can be obtained from the assumptions in (A-III), but it is also of interest that the conditions in (A-III) yield results without a fourth moment assumption as in (A-II).
\end{rem}

Condition (\ref{FisherInformation}) and Lemma \ref{move-R-infty} below allow us to link half space depth to admissible translates of product measures and the results of Kakutani \cite{kakutani-prod-equiv} and Shepp \cite{shepp-admissible}. 
\begin{thm}\label{nasc-indep} Let $X$ be a random vector with values in a real separable F-space $B$, and assume $\{t_k(X): k \ge 1\}$ are independent, symmetric random variables with variances $\sigma_k^2 \in (0, \infty), k \ge 1.$  In addition, assume (A-I) and (A-II) hold, or (A-III) holds. Then, for $a\in B$ and  $\mathcal{T}$ all finite linear combinations of the maps $\{t_k: k \ge 1\}$,
\begin{align}\label{thm2hd>0}
HD_{\mathcal{T}}(a,\mu)> 0     
\end{align}
if and only if 
\begin{align}\label{series-for-a}
\sum_{k\ge 1} \frac{t_k^2(a)}{\sigma_k^2} < \infty. \
\end{align}
\end{thm}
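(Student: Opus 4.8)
The plan is to prove the two directions separately, the converse being essentially free and the forward direction carrying all the work, which I further split according to whether one assumes (A-III) or (A-I)--(A-II). For necessity, the implication that $HD_{\mathcal T}(a,\mu)>0$ forces (\ref{series-for-a}) is just the contrapositive of Theorem \ref{necessity-zero-depth}: independent symmetric random variables with variances in $(0,\infty)$ are in particular mean zero and pairwise uncorrelated, so the hypotheses of Theorem \ref{necessity-zero-depth} hold, and divergence of $\sum_k t_k^2(a)/\sigma_k^2$ gives $HD_{\mathcal T}(a,\mu)=0$. Hence positivity of the depth forces the series to converge.

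For sufficiency I first record a reduction common to both hypotheses. Assuming (\ref{series-for-a}), put $R=(\sum_k t_k^2(a)/\sigma_k^2)^{1/2}<\infty$. By (\ref{hsdell0}), since each $\alpha\in\ell_0$ is supported in some $\{1,\dots,d\}$, we have $HD_{\mathcal T}(a,\mu)=\inf_{d\ge1}HD_{\mathbb R^d}(\Pi_d(a),\mu^{\Pi_d})$, a nonincreasing limit whose terms are positive by (A-I) (or, under (A-III), by the Remark). For fixed $\alpha$ the variable $t_\alpha(X)=\sum_k\alpha_k t_k(X)$ is symmetric with variance $\tau_\alpha^2=\sum_k\alpha_k^2\sigma_k^2$, and Cauchy--Schwarz gives $t_\alpha(a)\le\tau_\alpha R$. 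Thus if $t_\alpha(a)\le0$ symmetry yields $P(t_\alpha(X)\ge t_\alpha(a))\ge\tfrac12$, while otherwise $P(t_\alpha(X)\ge t_\alpha(a))\ge P(W_\alpha\ge\theta_\alpha)$, where $W_\alpha=t_\alpha(X)/\tau_\alpha$ is symmetric with mean zero and variance one and $\theta_\alpha=t_\alpha(a)/\tau_\alpha\in(0,R]$. So sufficiency reduces to a lower bound for $\inf_\alpha P(W_\alpha\ge\theta_\alpha)$ that is positive and uniform in $\alpha$, hence in $d$.

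Under (A-III) I would route this through admissible translates. The map $x\mapsto(t_k(x))_k$ pushes $\mu$ forward to the product measure $\prod_k\nu_k$ on $\mathbb R^{\infty}$, with $\nu_k$ the law of $t_k(X)=\lambda_k\xi_k$ and density $p_k(u)=\lambda_k^{-1}\phi(u/\lambda_k)$. The scaling identity $\mathcal I(p_k)=\mathcal I(\phi)/\lambda_k^2$ together with $\sigma_k^2=\lambda_k^2\sigma^2$ gives $\sum_k t_k^2(a)\,\mathcal I(p_k)=\mathcal I(\phi)\sigma^2\sum_k t_k^2(a)/\sigma_k^2$. Because (\ref{FisherInformation}) holds, the Kakutani--Shepp theory (\cite{kakutani-prod-equiv}, \cite{shepp-admissible}) identifies finiteness of this sum of Hellinger-type quantities with $(t_k(a))_k$ being an admissible translate of $\prod_k\nu_k$; thus (\ref{series-for-a}) is exactly admissibility. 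Lemma \ref{move-R-infty} then converts admissibility into $HD_{\mathcal T}(a,\mu)>0$, by the same mechanism that, in the Remark, transfers the depth $\tfrac12$ at the symmetric center to every point of $\mathbb R^d$.

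Under (A-I)--(A-II) the argument is a direct tail estimate, and this is where the main obstacle lies. Here (A-I) makes every term $HD_{\mathbb R^d}(\Pi_d(a),\mu^{\Pi_d})$ positive, so only uniformity is in doubt; it also excludes boundary degeneracies (e.g. scaled Rademacher coordinates with $|t_k(a)|\ge\sigma_k$, for which the depth would vanish despite convergence of the series). By (A-II), $\Ex(W_\alpha^4)=\tau_\alpha^{-4}\Ex(t_\alpha^4(X))\le c+3$ uniformly in $\alpha$, so $\{W_\alpha^2\}$ is uniformly integrable. I would then split according to how spread out $\alpha$ is, measured by $m_\alpha=\max_k|\alpha_k|\sigma_k/\tau_\alpha$: when $\theta_\alpha$ is bounded away from $1$, a Paley--Zygmund bound applied to $W_\alpha^2$ gives $P(W_\alpha\ge\theta_\alpha)\ge(1-\theta_\alpha^2)^2/(2(c+3))$, while in the delicate regime $\theta_\alpha$ near $R$ the Cauchy--Schwarz step is nearly tight only when $\alpha_k\approx t_k(a)/\sigma_k^2$ spreads over infinitely many coordinates, forcing $m_\alpha$ small; there a Berry--Esseen estimate with error governed by $m_\alpha$ and controlled through the uniform fourth moment shows $W_\alpha$ is close to $N(0,1)$, so $P(W_\alpha\ge\theta_\alpha)\ge1-\Phi(R)-o(1)>0$. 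The hard part is gluing these two regimes into a single strictly positive bound uniform over all $\alpha$: one must rule out a sequence with $P(W_{\alpha^{(n)}}\ge\theta_{\alpha^{(n)}})\to0$, using that a moderate threshold is handled by Paley--Zygmund whereas a large threshold forces asymptotic normality, so neither can drive the probability to zero.
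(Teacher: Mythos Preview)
Your necessity argument and the (A-III) route via Kakutani--Shepp and Lemma~\ref{move-R-infty} match the paper's proof.

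Under (A-I)--(A-II), however, there is a real gap. Your two-regime dichotomy (Paley--Zygmund when $\theta_\alpha$ is not too large, Berry--Esseen when $\theta_\alpha$ is near $R$) does not cover all $\alpha\in\ell_0$, and the claimed implication ``$\theta_\alpha$ near $R$ forces $m_\alpha$ small'' is false. If $\sum_k t_k^2(a)/\sigma_k^2$ is dominated by a single term, say $t_1^2(a)/\sigma_1^2$ is most of $R^2$ with $R\gg1$, then $\alpha=e_1$ gives $\theta_\alpha\approx R$ and $m_\alpha=1$: Paley--Zygmund is vacuous because $\theta_\alpha>1$, and Berry--Esseen is vacuous because there is a single summand. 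Nothing in your argument then prevents $P(W_\alpha\ge\theta_\alpha)=P(t_1(X)\ge t_1(a))$ from being as small as you like, and indeed it can be arbitrarily small while (A-I) and (A-II) still hold. You note that (A-I) ``makes every term positive'' and ``excludes boundary degeneracies,'' but you never use it in the quantitative bound, and that is exactly where the argument fails.

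The paper's device is the one you are missing: fix $d$ once so that $\sum_{k>d} t_k^2(a)/\sigma_k^2<1$, and for every $\alpha\in\ell_0$ use independence to factor
\[
P\Bigl(\sum_{k\ge1}\alpha_k t_k(X)\ge\sum_{k\ge1}\alpha_k t_k(a)\Bigr)
\ge P\Bigl(\sum_{k\le d}\cdots\Bigr)\,P\Bigl(\sum_{k>d}\cdots\Bigr).
\]
The first factor is bounded below, uniformly in $\alpha$, by $HD_{\mathbb R^d}(\Pi_d(a),\mu^{\Pi_d})>0$ from (A-I); the second factor is bounded below, uniformly in $\alpha$, by your Paley--Zygmund estimate, which is now legitimate because the tail ``$R$'' is below $1$. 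So (A-I) is not merely a nondegeneracy check: it absorbs the finitely many possibly large ratios $|t_k(a)|/\sigma_k$ into one fixed positive constant, leaving Paley--Zygmund to handle only the regime where it actually works. No Berry--Esseen step is needed.
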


Our next theorem examines empirical half space depth, and the corollary and remark following its statement clarify consistency for the empirical depth in the setting of
Theorems \ref{necessity-zero-depth} and \ref{nasc-indep} provided (\ref{thm3ReqCond}) holds.
\bigskip

\begin{thm}\label{emp-depth} Let  $\{X_j: j \ge 1\}$ be i.i.d. copies of $X$ where $X$ is a random vector taking values in a real F-space $B$ and 
$\{t_k(X): k \ge 1\}$ are independent mean zero random variables
with variances $\sigma_k^2 \in (0, \infty), k \ge 1.$ Furthermore, assume for all  
$c_k,k \ge 1,$ 
such that $\lim_{k \rightarrow \infty} c_{k}/\sigma_{k}=0,$
\begin{align}\label{thm3-assump}
\liminf_{k \rightarrow \infty} P(t_{k}(X)< c_{k})>0, 
\end{align}
and $\mathcal{T} \supseteq \{t_k(\cdot): k \ge 1\}$.
Then, for all $ n \ge 1$ and $a \in B$ with 
\begin{align}\label{thm3ReqCond}
\lim_{k \rightarrow \infty} \frac{t_k(a)}{\sigma_k}=0, 
\end{align}
the empirical half space depth
\begin{align}\label{2.21}
HD_{\mathcal{T}}(a,\mu_{n})\equiv \inf_{t \in \mathcal{T}} \frac{1}{n} \sum_{j=1}^n I(t(X_j) \ge t(a))=0 
\end{align}
with probability one.
\end{thm}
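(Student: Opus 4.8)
The plan is to reduce the statement to a single, concrete ``all observations fall below'' event and then produce such an event for infinitely many of the coordinate maps $t_k$ by an independence plus Borel--Cantelli argument. Since $\mathcal{T} \supseteq \{t_k : k \ge 1\}$ and each summand in (\ref{2.21}) is nonnegative, I would first record the two-sided bound
$$0 \le HD_{\mathcal{T}}(a,\mu_n) \le \inf_{k \ge 1} \frac{1}{n} \sum_{j=1}^n I(t_k(X_j) \ge t_k(a)).$$
Because for each fixed functional the average on the right takes values in $\{0, 1/n, 2/n, \dots, 1\}$, it equals zero exactly when all $n$ observations satisfy $t_k(X_j) < t_k(a)$. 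Hence it suffices to show that, with probability one, there is at least one index $k$ (depending on $\omega$) for which $t_k(X_j) < t_k(a)$ for every $j = 1, \dots, n$; on such a $k$ the right-hand infimum is $0$, and the bound forces $HD_{\mathcal{T}}(a,\mu_n) = 0$.

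Next I would apply the hypothesis with $c_k = t_k(a)$. By (\ref{thm3ReqCond}) we have $c_k/\sigma_k \to 0$, so (\ref{thm3-assump}) gives $\delta := \liminf_{k \to \infty} P(t_k(X) < t_k(a)) > 0$; writing $p_k = P(t_k(X) < t_k(a))$, this means $p_k \ge \delta/2$ for all sufficiently large $k$. For fixed $n$ I then define the events $A_k = \{t_k(X_j) < t_k(a) \text{ for all } 1 \le j \le n\}$. Since $X_1, \dots, X_n$ are i.i.d. copies of $X$, for each fixed $k$ the variables $t_k(X_1), \dots, t_k(X_n)$ are i.i.d., so $P(A_k) = p_k^n$. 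The key structural point, and the one place I expect to need genuine care, is the independence of the family $\{A_k : k \ge 1\}$: within a single copy $X_j$ the coordinates $\{t_k(X_j) : k \ge 1\}$ are independent by hypothesis, and across $j$ the $X_j$ are independent random elements of $B$, so the whole array $\{t_k(X_j) : k \ge 1,\ 1 \le j \le n\}$ is jointly independent; as $A_k$ is measurable with respect to the single column $\{t_k(X_j) : 1 \le j \le n\}$, distinct $A_k$ depend on disjoint blocks of an independent array and are therefore independent events.

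Finally, since $p_k \ge \delta/2 > 0$ for all large $k$, we obtain $\sum_{k \ge 1} P(A_k) = \sum_{k \ge 1} p_k^n = \infty$, the tail of the series being bounded below by infinitely many terms $(\delta/2)^n$. The second Borel--Cantelli lemma, applicable precisely because the $A_k$ are independent, then yields $P(A_k \text{ infinitely often}) = 1$. Thus with probability one some $A_k$ occurs, and by the displayed bound $HD_{\mathcal{T}}(a,\mu_n) = 0$ almost surely. This argument runs for each fixed $n$, and intersecting over the countably many values of $n$ gives the conclusion simultaneously for all $n \ge 1$ with probability one. The main obstacle is thus not analytic but the independence bookkeeping in the Borel--Cantelli step; everything else rests on the elementary observation that an average of indicators can be forced to vanish exactly, not merely to be small.
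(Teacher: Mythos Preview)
Your proposal is correct and follows essentially the same route as the paper: reduce to the coordinate maps $t_k$, use the hypothesis with $c_k=t_k(a)$ to get a uniform positive lower bound on $P(t_k(X)<t_k(a))$ along an infinite set of indices, observe that the events $A_k=\{Z_{n,k}(a)=0\}$ are independent across $k$, and apply the second Borel--Cantelli lemma. The only cosmetic difference is that the paper extracts a subsequence $\{k_i\}$ with $p_{k_i}\ge\delta$ whereas you use $p_k\ge\delta/2$ for all large $k$; these are interchangeable consequences of the $\liminf$ being positive.
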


\begin{rem} The empirical half space depth, $HD_{\mathcal{T}}(a,\mu_{n})$, can be thought to have two "random quantities" on different probability spaces. One, say, $\omega$ with corresponding probability, $P$,  is through the random variables in $\mu_{n}$ and the other is $a$ with respect to the induced measure, $\mu$,  on the space, $B$.  In \cite{mosler-poly-funct-depth} Mosler and Polyakova give a result that holds quite generally for Tukey-like depths as long as $\mathcal{T}=B^{*}$. Namely that fixing $\omega$ and computing with respect to $\mu$ the empirical depth equals zero with $\mu$-probability one provided finite dimensional subspaces have $\mu$ probability zero.
On the other hand, in special, but interesting circumstances (as in the above Theorem \ref{emp-depth}), we fix $a$ in a large class and show, that the empirical depths are zero with $P$-probability one. So, in particular, we can say for each point in this ``large class'' that consistency fails or fails to give any information.
\end{rem}

\begin{cor}\label{consist} Assume the conditions in Theorem \ref{emp-depth} with $\mathcal{T}$ all finite linear combinations of $\{t_k(\cdot): k \ge 1\}$, and that (A-I) and (A-II) hold, or (A-III) holds. Then, for each $a \in B$ such that 
\begin{align}\label{series-for-a-again}
\sum_{k \ge 1}\frac{t_k^2(a)}{\sigma_k^2}<\infty, 
\end{align}
the empirical half space depth 
\begin{align}\label{cor1EmpHS=0}
HD_{\mathcal{T}}(a,\mu_{n})=0 
\end{align}
with probability one for all $n\ge 1$, and the half space depth $HD_{\mathcal{T}}(a,\mu)> 0$. Hence the natural  empirical half depth fails to approximate the true half space depth at such points $a \in B$, i.e. consistency fails at all such points. However, we do have consistency for those $a\in B$ where (\ref{series-for-a}) fails, but (\ref{thm3ReqCond}) holds.
\end{cor}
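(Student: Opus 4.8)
The plan is to assemble the corollary from the three theorems already in hand, using the convergence of the series in (\ref{series-for-a-again}) as the single hinge that simultaneously triggers zero empirical depth and positive true depth. The standing hypotheses of the corollary (independence, mean zero, finite positive variances $\sigma_k^2$, the liminf condition (\ref{thm3-assump}), the choice of $\mathcal{T}$ as all finite linear combinations of the $t_k$, and either (A-I) and (A-II), or (A-III)) are exactly what is needed to invoke Theorems \ref{necessity-zero-depth}, \ref{nasc-indep}, and \ref{emp-depth} in turn, so the work is almost entirely a matter of checking that the series condition feeds each theorem the hypothesis it requires.

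First I would handle the empirical depth. The key elementary observation is that convergence of $\sum_{k\ge 1} t_k^2(a)/\sigma_k^2$ forces its general term to vanish, so $t_k^2(a)/\sigma_k^2 \to 0$ and hence $t_k(a)/\sigma_k \to 0$; this is precisely condition (\ref{thm3ReqCond}). Since $\mathcal{T}$ contains each $t_k$ and the remaining hypotheses of Theorem \ref{emp-depth} are assumed, that theorem applies and yields $HD_{\mathcal{T}}(a,\mu_n)=0$ with probability one for every fixed $n$; intersecting over the countable family $\{n\ge 1\}$ then gives (\ref{cor1EmpHS=0}) simultaneously for all $n$ on a set of probability one.

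Next I would establish positivity of the true depth. Under the corollary's assumptions the hypotheses of Theorem \ref{nasc-indep} hold, and that theorem states $HD_{\mathcal{T}}(a,\mu)>0$ if and only if (\ref{series-for-a}) holds; since the series is assumed finite, $HD_{\mathcal{T}}(a,\mu)>0$ follows at once. Comparing the two conclusions, at every such $a$ the empirical depth is identically zero (a.s., all $n$) while the limiting object it should approximate is strictly positive, so consistency must fail. For the final assertion I would instead suppose (\ref{series-for-a}) fails, i.e. $\sum_{k\ge 1}t_k^2(a)/\sigma_k^2=\infty$, while (\ref{thm3ReqCond}) still holds: Theorem \ref{necessity-zero-depth} (whose uncorrelatedness hypothesis is supplied by independence together with mean zero) then gives $HD_{\mathcal{T}}(a,\mu)=0$ through (\ref{thm1series-hyp}), and Theorem \ref{emp-depth} again gives $HD_{\mathcal{T}}(a,\mu_n)=0$ a.s.; since both quantities are zero, the empirical depth trivially converges to the true depth and consistency holds.

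There is no genuinely hard analytic step internal to the corollary, since the substance lives in Theorems \ref{necessity-zero-depth}, \ref{nasc-indep}, and \ref{emp-depth}. The one point that requires care is the bookkeeping of hypotheses across the two regimes: invoking Theorem \ref{nasc-indep} for positivity draws on the symmetry assumption and on (A-I) and (A-II), or (A-III), whereas Theorem \ref{emp-depth} needs only mean zero together with (\ref{thm3-assump}). I would therefore be explicit about which assumptions are consumed where, and about the elementary fact that a convergent series of nonnegative terms has terms tending to zero, as this is what links (\ref{series-for-a-again}) to (\ref{thm3ReqCond}).
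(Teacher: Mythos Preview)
Your proposal is correct and matches the paper's approach exactly: the paper's proof (stated in the remark immediately following the corollary) simply says the result follows immediately from Theorems \ref{necessity-zero-depth}, \ref{nasc-indep}, and \ref{emp-depth}, and you have spelled out precisely how those three theorems combine, including the elementary but necessary observation that (\ref{series-for-a-again}) implies (\ref{thm3ReqCond}).
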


\begin{rem} The proof of Corollary \ref{consist} follows immediately from Theorems  \ref{necessity-zero-depth}, \ref{nasc-indep} and \ref{emp-depth}. Furthermore, if the condition in (\ref{thm3-assump}) is replaced by the assumption 
\begin{align*}
\liminf_{k \rightarrow \infty} P(t_{k}(X)< t_{k}(a))>0,
\end{align*}
then the proof of Theorem \ref{emp-depth} implies that  $HD_{\mathcal{T}}(a,\mu_n)=0$ with probability one for all $n \ge 1$ without assuming the variances $\sigma_k^2$ exist. However, the condition (\ref{thm3ReqCond}) allows us to relate empirical half space depth to the true half space depth as indicated in Corollary \ref{consist}. Moreover, it is only under the assumptions in (A-I) and (A-II) where the condition (\ref{thm3-assump}) is something extra. That is, the assumptions in (A-III) imply (\ref{thm3-assump}).
This can be seen by observing that $P(t_{k}(X)< c_k)= \int_{-\infty}^{\frac{c_k}{\lambda_k}} \phi(x)dx,$ and hence $\sigma_k=\sigma \lambda_k$, $\lim_{k \rightarrow \infty} c_{k}/\sigma_{k}=0$, and $\phi$ symmetric about zero implies $\lim_{k \rightarrow \infty}P(t_{k}(X)< c_k)= 1/2.$ 
\end{rem}

\subsection{Proof of Theorem \ref{necessity-zero-depth}}
 Using (\ref{talpha}) and (\ref{hsdell0})
\begin{align*}
HD_{\mathcal{T}}(a,\mu)\le  \inf_{\alpha \in \ell_0, t_{\alpha}(a)>0} P( t_{\alpha}(X) \ge t_{\alpha}(a)),
\end{align*}
and by Markov's inequality
\begin{align*}
HD_{\mathcal{T}}(a,\mu)\le  \inf_{\alpha \in \ell_0, t_{\alpha}(a) >0} \Ex(t_{\alpha}^2(X)/t_{\alpha}^2(a))= \inf_{\alpha \in \ell_0, t_{ \alpha}(a) >0} \frac{\sum_{k\ge 1} \alpha_k^2 \sigma_k^2 }{(\sum_{k \ge 1} \alpha_k t_k(a))^2} .
\end{align*}
Therefore,
$HD_{\mathcal{T}}(a,\mu) =0$
whenever
\begin{align*}
 \sup_{\alpha \in \ell_0, t_{\alpha}(a) >0} \frac{(\sum_{k \ge 1} \alpha_k t_k(a))^2 }{\sum_{k \ge 1} \alpha_k^2 \sigma_k^2 } =\infty.
\end{align*}
Given $a \in B$ such that (\ref{thm1series-hyp}) holds, then by setting $\alpha_k= t_k(a)/\sigma_k^2, k=1,\cdots,n,$ and zero for $ k\ge n,$ we therefore have
\begin{align*}
 \sup_{\alpha \in \ell_0, t_{\alpha}(a) >0} \frac{(\sum_{k \ge 1} \alpha_k t_k(a)^2 }{\sum_{k\ge 1} \alpha_k^2 \sigma_k^2 } \ge \sup_{n}  \sum_{k= 1}^n t_k(a)^2/\sigma_k^2 = \infty. 
\end{align*}
Thus the theorem is proved as the final assertion that (\ref{thm1-conclusion}) follows from (\ref{thm1-weak-series-hyp}) is now immediate.
\bigskip

\subsection{Proof of Theorem \ref{nasc-indep}}
To prove Theorem \ref{nasc-indep} it will be convenient to first prove some lemmas, where we continue to use the parameterization of $\mathcal{T}$ determined in (\ref{talpha}).\\

\begin{lem}\label{lin-comb-4th-mom} Let $\{t_{k}(X)\colon k\ge 1\}$ be independent with mean zero, $\sigma_{k}^{2}= \E(t_k^2(X))\in(0,\infty),$ and define $\ell_{0}^{+}=\{\alpha\in\ell_{0}: \sum_{k\ge 1}\alpha_{k}^{2}>0\}$. Furthermore, assume there exists $c \in (0,\infty)$ such that  (A-II) holds. 
Then, 
\begin{align}\label{fourthpower}\inf_{\alpha \in \ell_0^+} \frac{[\E((\sum_{k \ge 1}\alpha_k t_k(X))^2)]^2  }{\E((\sum_{k \ge 1}\alpha_k t_k(X))^4)   } \ge  (3 c)^{-1}.
\end{align}
\end{lem}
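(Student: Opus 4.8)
The plan is to recognize \eqref{fourthpower} as equivalent to the single pointwise fourth-moment inequality
\begin{align*}
\E\Bigl(\bigl(\textstyle\sum_{k\ge1}\alpha_k t_k(X)\bigr)^4\Bigr)\le 3c\,\Bigl[\E\Bigl(\bigl(\textstyle\sum_{k\ge1}\alpha_k t_k(X)\bigr)^2\Bigr)\Bigr]^2
\end{align*}
holding for every $\alpha\in\ell_0^+$, and then to establish this by a direct expansion of the fourth moment. Writing $Y_k=\alpha_k t_k(X)$ and $S=\sum_{k\ge1}Y_k$ (a finite sum of independent, mean-zero random variables), I would first observe that $\E(S^2)=\sum_k\E(Y_k^2)>0$ for $\alpha\in\ell_0^+$ since each $\sigma_k^2>0$, so the ratio in \eqref{fourthpower} is well defined and $\E(S^4)\ge[\E(S^2)]^2>0$ by Cauchy--Schwarz.

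The key step is the moment computation. Expanding $S^4$ and using independence together with $\E(Y_k)=0$, every term containing an index of odd multiplicity vanishes, leaving only the ``all four equal'' and ``two distinct pairs'' contributions:
\begin{align*}
\E(S^4)=\sum_{k\ge1}\E(Y_k^4)+3\sum_{i\ne j}\E(Y_i^2)\E(Y_j^2),
\end{align*}
while squaring $\E(S^2)=\sum_k\E(Y_k^2)$ gives $[\E(S^2)]^2=\sum_k[\E(Y_k^2)]^2+\sum_{i\ne j}\E(Y_i^2)\E(Y_j^2)$. Eliminating the common cross-term sum between these two identities yields $\E(S^4)=3[\E(S^2)]^2+\sum_k\bigl(\E(Y_k^4)-3[\E(Y_k^2)]^2\bigr)$. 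At this point I would invoke (A-II) in the form $\E(Y_k^4)=\alpha_k^4\E(t_k^4(X))\le c\,\alpha_k^4\sigma_k^4=c[\E(Y_k^2)]^2$, obtaining $\E(S^4)\le 3[\E(S^2)]^2+(c-3)\sum_k[\E(Y_k^2)]^2$.

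The only delicate point is pinning down the constant as exactly $3c$, and I expect this to be the main (albeit minor) obstacle. I would handle it by splitting on the size of $c$, noting first that $c\ge1$ always holds by Cauchy--Schwarz (so $\E(t_k^4(X))\ge\sigma_k^4$). If $c\le3$ the last sum carries a nonpositive coefficient and is discarded, giving $\E(S^4)\le3[\E(S^2)]^2\le3c[\E(S^2)]^2$. If $c>3$ I would use the elementary bound $\sum_k[\E(Y_k^2)]^2\le(\sum_k\E(Y_k^2))^2=[\E(S^2)]^2$ (all summands nonnegative) to get $\E(S^4)\le3[\E(S^2)]^2+(c-3)[\E(S^2)]^2=c[\E(S^2)]^2\le3c[\E(S^2)]^2$. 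In either case the displayed fourth-moment inequality holds, and taking reciprocals and then the infimum over $\alpha\in\ell_0^+$ gives \eqref{fourthpower} with the stated constant $(3c)^{-1}$.
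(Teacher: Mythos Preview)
Your proof is correct and follows essentially the same approach as the paper: expand $\E(S^4)$ using independence and mean zero, then compare with $[\E(S^2)]^2$ via (A-II) and $c\ge1$. The paper avoids your case split by bounding the two pieces directly, observing that $\sum_k\E(Y_k^4)\le c\sum_k[\E(Y_k^2)]^2\le 3c\sum_k[\E(Y_k^2)]^2$ and $6\sum_{i<j}\E(Y_i^2)\E(Y_j^2)\le 3c\cdot 2\sum_{i<j}\E(Y_i^2)\E(Y_j^2)$ (since $c\ge1$), which together give $\E(S^4)\le 3c[\E(S^2)]^2$ in one line.
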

\begin{proof}  Expanding the sum to the fourth power we have
$$
\E(|\sum_{k\ge 1} \alpha_kt_{k}(X)|^4)= \sum_{k \ge 1}\alpha_k^4\E(t_{k}(X)^4) + 6\sum_{1\le  i < j}\alpha_i^2 \alpha_j^2\E(t_{i}(X)^2)\E(t_{j}(X)^2),
$$
and hence, since $c\ge 1$,
\begin{align*}
\E(|\sum_{k\ge 1} \alpha_kt_{k}(X)|^4) &\le 3c[\sum_{k \ge 1}\alpha_k^4\E^2(t_{k}(X)^2) + 2\sum_{1 \le i<j}\alpha_i^2 \alpha_j^2\E(t_{i}(X)^2)\E(t_{j}(X)^2)]\\
&=3c[\sum_{k \ge 1}\alpha_k^2\E(t_{k}(X)^2)]^2.
\end{align*}
\end{proof}

\begin{lem}\label{using-PZ} If $\{t_k(X): k \ge 1\}$ are independent and symmetric (about zero), (A-II) holds, and $a \in B$ is such that $ \sum_{k \ge 1}\frac{t_k^2(a)}{\sigma_k^2}<1$, then
\begin{align*}
\inf_{\alpha \in \ell_0}P( \sum_{k \ge 1}\alpha_k t_k(X) \ge \sum_{k \ge 1}\alpha_k t_k(a))>0.
\end{align*}
\end{lem}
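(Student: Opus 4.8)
The plan is to reduce the statement to a single anti-concentration estimate for the normalized functionals $t_\alpha(X)=\sum_{k\ge1}\alpha_k t_k(X)$ that is \emph{uniform} in $\alpha$, and to obtain that estimate from the Paley--Zygmund inequality fed by the scale-invariant fourth-moment bound of Lemma \ref{lin-comb-4th-mom}. For $\alpha\in\ell_0^+$ write $v_\alpha^2=\E(t_\alpha^2(X))=\sum_{k\ge1}\alpha_k^2\sigma_k^2$, which is finite and strictly positive. The first step is Cauchy--Schwarz applied to the finite sum $t_\alpha(a)=\sum_{k\ge1}(\alpha_k\sigma_k)\,(t_k(a)/\sigma_k)$, giving $t_\alpha(a)^2\le v_\alpha^2\sum_{k\ge1} t_k^2(a)/\sigma_k^2$. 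Setting $b=\sum_{k\ge1} t_k^2(a)/\sigma_k^2<1$, this yields the key uniform inequality $t_\alpha(a)\le v_\alpha\sqrt{b}$ with $\sqrt{b}<1$, so it suffices to bound $P(t_\alpha(X)\ge v_\alpha\sqrt{b})$ from below, uniformly over $\alpha$.

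Next I would normalize by setting $Y=t_\alpha(X)/v_\alpha$, so that $Y$ has mean zero, variance one, and, being a finite sum of the independent symmetric random variables $\alpha_k t_k(X)$, is itself symmetric about zero. The target probability then satisfies $P(t_\alpha(X)\ge t_\alpha(a))\ge P(Y\ge\sqrt{b})$. I would apply the Paley--Zygmund inequality to the nonnegative variable $Z=Y^2$, for which $\E Z=1$ and, by Lemma \ref{lin-comb-4th-mom}, $\E(Z^2)=\E(t_\alpha^4(X))/[\E(t_\alpha^2(X))]^2\le 3c$. With threshold $\theta=b<1$, Paley--Zygmund gives $P(Y^2>b)\ge (1-b)^2/\E(Z^2)\ge (1-b)^2/(3c)$.

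Finally, symmetry of $Y$ gives $P(Y^2>b)=2\,P(Y>\sqrt{b})$, so that $P(t_\alpha(X)\ge t_\alpha(a))\ge P(Y\ge\sqrt{b})\ge (1-b)^2/(6c)$. Since this lower bound is independent of $\alpha$ and strictly positive (because $b<1$), and since the degenerate case $\alpha=0$ trivially yields probability one, taking the infimum over $\ell_0$ proves the lemma.

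The main obstacle is securing the estimate \emph{uniformly} in $\alpha$: a bare Markov or Chebyshev argument controls only an upper tail and its bound deteriorates as $\alpha$ ranges over $\ell_0$, whereas what is needed is a single positive lower bound on $P(t_\alpha(X)\ge t_\alpha(a))$ valid simultaneously for every finite linear combination. That uniformity is exactly what the scale-invariant fourth-moment ratio of Lemma \ref{lin-comb-4th-mom} supplies, and combining it with Paley--Zygmund is the crux of the argument; the Cauchy--Schwarz reduction and the symmetrization step are routine once this mechanism is in place.
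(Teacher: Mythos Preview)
Your argument is correct and follows essentially the same route as the paper: Cauchy--Schwarz to bound $t_\alpha(a)$ by $v_\alpha\sqrt{b}$, symmetry to pass from a one-sided to a two-sided tail, and then the Paley--Zygmund inequality combined with the uniform fourth-moment ratio of Lemma~\ref{lin-comb-4th-mom}. The only cosmetic difference is that the paper introduces an auxiliary $\delta>0$ with $b<(1-\delta)^2$ and carries that parameter through, whereas you work directly with $b$; your final bound $(1-b)^2/(6c)$ is the limiting case of the paper's $(2\delta-\delta^2)^2/(6c)$ as $\delta\uparrow 1-\sqrt{b}$.
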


\begin{proof} Take $\delta>0$ such that $ \sum_{k \ge 1}\frac{t_k^2(a)}{\sigma_k^2}<(1-\delta)^2.$ Then,
\begin{align*}
|\sum_{k\ge 1}\alpha_kt_k(a)| \le ||\{\alpha_k \sigma_k\}||_2||\{t_k(a)/\sigma_k\}||_2  \le (1-\delta)||\{\alpha_k\sigma_k\}||_2,
\end{align*}
and hence 
\begin{align*}
P(\sum_{k \ge 1}\alpha_kt_k(X) \ge \sum_{k\ge 1}\alpha_kt_k(a))\ge P(\sum_{k \ge 1}\alpha_k t_k(X) \ge (1-\delta)||\{\alpha_k\sigma_k\}||_2).
\end{align*}
Since $\{t_k(X): k \ge 1\}$ are independent and symmetric, we thus have
\begin{align}\label{2.25}
P(\sum_{k \ge 1}\alpha_kt_k(X) \ge \sum_{k\ge 1}\alpha_kt_k(a))\ge \frac{1}{2}P(|\sum_{k \ge 1}\alpha_k t_k(X) | \ge (1-\delta)||\{\alpha_k\sigma_k\}||_2).
\end{align}
Now
\begin{align*}
\E((\sum_{k \ge 1}\alpha_kt_k(X))^2)= \sum_{k \ge 1} \alpha_k^2\sigma_k^2,
\end{align*}
and hence (\ref{2.25}) and the Paley-Zygmund inequality implies 
\begin{align*}
P(\sum_{k\ge 1}\alpha_k t_k(X) &\ge \sum_{k\ge 1}\alpha_k t_k(a))\notag\\
\ge \frac{1}{2}P(|\sum_{k \ge 1}\alpha_kt_k(X) |^2 &\ge (1-\delta)^2||\{\alpha_k\sigma_k\}||^2_2)
\end{align*}
\begin{align}\label{2.29}
\ge \frac{1}{2}\{[1-(1-\delta)^2]\frac{\E((\sum_{k \ge 1}\alpha_kt_k(X))^2)}{(\E((\sum_{k \ge 1}\alpha_k  t_k(X))^4))^{\frac{1}{2}}}\}^2. 
\end{align}
Since (A-II) holds, Lemma \ref{lin-comb-4th-mom} implies we can combine (\ref{fourthpower}) and (\ref{2.29}) to obtain
\begin{align}
\inf_{\alpha \in \ell_0}P( \sum_{k \ge 1}\alpha_kt_k(X) \ge \sum_{k \ge 1}\alpha_kt_k(a)) \ge \frac{1}{6c} (2\delta-\delta^2)^2>0,
\end{align}
and the lemma is proven.
\end{proof}

\n{\bf Proof of Theorem \ref{nasc-indep} assuming (A-I)and (A-II).} First we observe from Theorem 1 that (\ref{series-for-a}) is necessary for (\ref{thm2hd>0}). Hence we turn to the converse. 

To prove sufficiency we first observe that for each $d \ge1$ and $\alpha \in \ell_0$
\begin{align}
P(\sum_{k \ge 1}\alpha_kt_k(X) \ge \sum_{k \ge 1}\alpha_kt_k(a))
\end{align}
\begin{align*}
\ge P(\sum_{k = 1}^d \alpha_kt_k(X) \ge \sum_{k = 1}^d\alpha_kt_k(a), \sum_{k \ge d+1} \alpha_kt_k(X) \ge \sum_{k \ge d+1}\alpha_kt_k(a)),
\end{align*}
and hence the independence of the $\{t_k(X):k \ge 1\}$ implies
\begin{align}
P(\sum_{k \ge 1}\alpha_kt_k(X) \ge \sum_{k \ge 1}\alpha_kt_k(a))
\end{align}
\begin{align*}
\ge P(\sum_{k = 1}^d \alpha_kt_k(X) \ge \sum_{k = 1}^d\alpha_kt_k(a))P( \sum_{k \ge d+1} \alpha_kt_k(X) \ge \sum_{k \ge d+1}\alpha_kt_k(a))).
\end{align*}
Taking $d$ sufficiently large such that 
\begin{align}
\sum_{k \ge d+1} \frac{t_k^2(a)}{ \sigma_k^2}<1,
\end{align}
we have from Lemma \ref{using-PZ} that
\begin{align}
\inf_{\alpha \in \ell_0}P( \sum_{k \ge d+1} \alpha_kt_k(X) \ge \sum_{k \ge d+1}\alpha_kt_k(a)))>0.
\end{align}
Since (\ref{A-I(i)}) holds we have
\begin{align}
\inf_{k \in \ell_0}P(\sum_{k = 1}^d \alpha_kt_k(X) \ge \sum_{k= 1}^d\alpha_kt_k(a))>0,
\end{align} 
and the theorem is proved when (A-I) and  (A-II) are assumed
\bigskip

 In order to complete the proof of the theorem under the assumptions in (A-III) we need a definition, some additional notation and some lemmas. We start with a definition of an admissible translate for a probability measure. Admissible translates appear in a variety of settings in the literature, sometimes with slightly different meanings, but for probability measures on the Borel subsets of $\mathbb{R}^{\infty}$ with i.i.d. coordinates our definition below agrees with that used by Shepp in \cite{shepp-admissible} for a totally indistinguishable translate. It also agrees with terminology used in the study of centered Gaussian measures $\mu$ on a separable Banach space, where the admissible translates are the vectors in the Hilbert space $H_{\mu}$ given in subsection 3.1, and in similar situations for other types of measures. 

\begin{mydef}\label{AdmTrans} Let $\mu$ be a probability measure on  the Borel subsets $\mathcal{B}_E$ of an F-space $E$, and for $x \in E$  and $A \in \mathcal{B}_E$ set
$\mu_x(A) = \mu(A-x).$ Then $x$ is an admissible translate of $\mu$ if $\mu_x$ and $\mu$ are mutually absolutely continuous with respect to one another on $(E,\mathcal{B}_E)$. 
\end{mydef}

\begin{lem}\label{-x adm}If $x$ is an admissible translate of the probability, $\mu$, then $-x$ is also an admissible translate of $\mu$.
\end{lem}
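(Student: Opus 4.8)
The plan is to exploit the elementary group structure of the translation operation together with the fact that mutual absolute continuity is preserved under a measurable bijection, so no symmetry or independence assumption on $\mu$ is needed.

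First I would record the composition rule for translates: for any $x,y \in E$ and any Borel $A$,
$$(\mu_x)_y(A) = \mu_x(A-y) = \mu(A-y-x) = \mu_{x+y}(A),$$
so $(\mu_x)_y = \mu_{x+y}$; in particular $(\mu_x)_{-x} = \mu_0 = \mu$. Next comes the key observation: for fixed $y$, the map $\nu \mapsto \nu_y$ is exactly the pushforward of $\nu$ under $S_y(z) = z+y$, since $\nu_y(A) = \nu(A-y) = \nu(S_y^{-1}(A))$. Because $E$ is a topological vector space, $S_y$ is a continuous bijection with continuous inverse $S_{-y}$, hence Borel measurable, and pushforward by a measurable map preserves one-sided absolute continuity. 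Indeed, if $\nu_1 \ll \nu_2$ and $(\nu_2)_y(A) = \nu_2(S_y^{-1}A) = 0$, then $\nu_1(S_y^{-1}A) = 0$, i.e. $(\nu_1)_y(A) = 0$, so $(\nu_1)_y \ll (\nu_2)_y$. Applying this in both directions, $\nu_1$ and $\nu_2$ mutually absolutely continuous implies $(\nu_1)_y$ and $(\nu_2)_y$ are mutually absolutely continuous; that is, translation by any fixed vector preserves mutual absolute continuity.

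Finally I would combine the two ingredients. By hypothesis $\mu_x$ and $\mu$ are mutually absolutely continuous. Translating this relation by $-x$ and invoking the preservation just established shows that $(\mu_x)_{-x}$ and $\mu_{-x}$ are mutually absolutely continuous. But the composition rule identifies the first measure as $\mu_0 = \mu$, so $\mu$ and $\mu_{-x}$ are mutually absolutely continuous, which is precisely the statement that $-x$ is an admissible translate of $\mu$.

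There is no serious obstacle here; the only point requiring a little care is to verify that the translate $\nu_y$ really is a genuine pushforward under a measurable map, so that the standard fact that pushforward preserves absolute continuity applies in each direction—this is exactly where the F-space hypothesis (continuity of the vector space operations) enters. It is worth noting in passing that the same reasoning shows the set of admissible translates of $\mu$ to be a subgroup of $(E,+)$, and the present lemma is just the statement that this set is closed under negation.
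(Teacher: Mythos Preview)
Your proof is correct and, once unwound, is essentially the same elementary computation as the paper's: the paper directly verifies that $\mu(A+x)=0 \iff \mu(A)=0$ by writing $\mu(A)=\mu((A+x)-x)=\mu_x(A+x)$ and invoking mutual absolute continuity of $\mu$ and $\mu_x$ in each direction. Your pushforward framing and composition rule $(\mu_x)_{-x}=\mu$ are just a more structural packaging of that same two-line argument; the added remark that admissible translates form a subgroup of $(E,+)$ is a nice bonus the paper does not state.
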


\begin{proof} Suppose $\mu(A+x)=0$. Then, $x$ an admissible translate of $\mu$ implies $\mu((A+x)-x)=0$, and hence $\mu(A)=0$. Conversely, if $\mu(A)=0$, then
$\mu((A+x)-x)=\mu(A)=0$, and since x is an admissible translate, this implies $\mu(A+x)=0$. 
\end{proof}
When $E$ is a sequence space, such as $\mathbb{R}^{\infty}$ or $\ell_{p}$, we denote the typical vector $x$ by writing $x=(x_1,x_2,\ldots)$ or $x=\{x_k: k \ge 1\}$.

\begin{lem}\label{move-R-infty} Let $X$ take values in the F-space $B$ and assume $\mu= \mathcal{L}(X)$ is  defined on $(B,\mathcal{B}_B)$. Let $\Lambda: B \rightarrow \mathbb{R}^{\infty}$ be such that
\begin{align}
\Lambda(a) = (t_1(a),t_2(a),\ldots), a \in B,
\end{align}
where the maps $t_k(\cdot), k \ge1,$ are $\mathcal{B}_B$ measurable to the reals, and  $\mathcal{L}(\Lambda(X))=\mathcal{L}(-\Lambda(X))$, i.e., $\Lambda(X)$ has a symmetric distribution. If $a \in B$ is such that $(t_1(a),t_2(a),\ldots)$ is an admissible translate for the probability measure $\mu^{\Lambda}(A)= \mu(\Lambda^{-1}(A))=P( X \in \Lambda^{-1}(A)), A \in \mathcal{B}_{\mathbb{R}^{\infty}}$, then 
\begin{align}\label{2.30}
HD_{\mathcal{T}}(a,\mu)>0, 
\end{align}
where $\mathcal{T}$ denotes all finite linear combination of the maps $\{t_k: k \ge 1\}$.
\end{lem}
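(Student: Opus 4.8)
The plan is to push everything forward to $\mathbb{R}^{\infty}$ through $\Lambda$ and recognize the half space depth as the depth of the symmetric law $\nu := \mu^{\Lambda}$ at the point $b := \Lambda(a) = (t_1(a),t_2(a),\ldots)$. For $\alpha \in \ell_0$ and $y=(y_1,y_2,\ldots) \in \mathbb{R}^{\infty}$ write $\langle \alpha, y\rangle = \sum_{k \ge 1}\alpha_k y_k$, a finite sum, so that $y \mapsto \langle \alpha, y\rangle$ is continuous and the half space $H_\alpha := \{y \in \mathbb{R}^{\infty}: \langle \alpha, y\rangle \ge \langle \alpha, b\rangle\}$ is Borel. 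Since $t_\alpha(X) = \langle \alpha, \Lambda(X)\rangle$ and $t_\alpha(a) = \langle \alpha, b\rangle$, formula (\ref{hsdell0}) gives
\begin{align*}
HD_{\mathcal{T}}(a,\mu) = \inf_{\alpha \in \ell_0} P(t_\alpha(X) \ge t_\alpha(a)) = \inf_{\alpha \in \ell_0} \nu(H_\alpha),
\end{align*}
and one observes that $b$ lies on the bounding hyperplane of each $H_\alpha$.

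I would then argue by contradiction, assuming $HD_{\mathcal{T}}(a,\mu)=0$ and choosing $\alpha^{(n)} \in \ell_0$ with $\nu(H_{\alpha^{(n)}}) \to 0$. The crucial point is that the translated measure $\nu_b(\cdot)=\nu(\cdot - b)$ charges every one of these half spaces by at least $1/2$. Indeed, translating by $b$ moves the bounding hyperplane through the origin:
\begin{align*}
\nu_b(H_\alpha) = \nu(H_\alpha - b) = \nu(\{y \in \mathbb{R}^{\infty}: \langle \alpha, y\rangle \ge 0\}),
\end{align*}
and the symmetry hypothesis $\mathcal{L}(\Lambda(X)) = \mathcal{L}(-\Lambda(X))$, i.e.\ $\nu(-A)=\nu(A)$, forces $\nu(\{\langle \alpha, \cdot\rangle \ge 0\}) = \nu(\{\langle \alpha, \cdot\rangle \le 0\})$; since these two sets cover $\mathbb{R}^{\infty}$, each has $\nu$-measure at least $1/2$. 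Hence $\nu_b(H_{\alpha^{(n)}}) \ge 1/2$ for every $n$.

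This is exactly where admissibility enters and closes the argument. Because $b=\Lambda(a)$ is an admissible translate of $\nu$, we have in particular $\nu_b \ll \nu$, so there is a density $f = d\nu_b/d\nu \in L^1(\nu)$. By the absolute continuity of the Lebesgue integral (for fixed $f \in L^1(\nu)$, $\nu(A)\to 0$ implies $\int_A f\,d\nu \to 0$),
\begin{align*}
\nu_b(H_{\alpha^{(n)}}) = \int_{H_{\alpha^{(n)}}} f\,d\nu \longrightarrow 0,
\end{align*}
contradicting the bound $\nu_b(H_{\alpha^{(n)}}) \ge 1/2$. Therefore the infimum is strictly positive, which is (\ref{2.30}).

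The computation itself is short, and the two hypotheses each play one clean role: symmetry yields the uniform lower bound $\nu_b(H_\alpha)\ge 1/2$, while admissibility converts it into a contradiction. The only thing to watch is that the infimum defining the depth need not be attained, which is why one passes to a minimizing sequence and uses the $L^1$ absolute-continuity fact rather than a single pointwise estimate. I expect the main (though modest) obstacle to be organizing the symmetrization step correctly, namely verifying $H_\alpha - b = \{\langle \alpha, \cdot\rangle \ge 0\}$ and noting that the overlap on the hyperplane $\{\langle \alpha, \cdot\rangle = 0\}$ only strengthens the lower bound, together with being explicit that only the direction $\nu_b \ll \nu$ of the mutual absolute continuity is needed.
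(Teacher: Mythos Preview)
Your proof is correct, and it follows a different, more direct path than the paper's. Both arguments work by contradiction, push forward to $\nu=\mu^{\Lambda}$ on $\mathbb{R}^{\infty}$, and exploit symmetry plus admissibility, but the mechanics differ. The paper passes to a subsequence $(f_n)$ with $\sum_n P(t_{f_n}(X)\ge t_{f_n}(a))<\infty$, applies Borel--Cantelli to obtain $(\mu^{\Lambda})_{-\Lambda(a)}(\{\langle f_n,\cdot\rangle<0\ \text{eventually}\})=1$, invokes Lemma~\ref{-x adm} to transfer this to $\mu^{\Lambda}$, and then uses symmetry to reach the contradiction that $\mu^{\Lambda}(\{\langle f_n,\cdot\rangle>0\ \text{eventually}\})=1$ as well. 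Your argument bypasses Borel--Cantelli entirely: you observe once and for all that $\nu_b(H_\alpha)\ge 1/2$ by symmetry, and then use the $L^1$ absolute-continuity of $d\nu_b/d\nu$ to force $\nu_b(H_{\alpha^{(n)}})\to 0$ from $\nu(H_{\alpha^{(n)}})\to 0$. This is shorter, avoids the auxiliary Lemma~\ref{-x adm}, and, as you note, uses only the direction $\nu_b\ll\nu$ of admissibility. The paper's route, on the other hand, isolates the role of the translate more visibly through the ``eventually'' sets and makes the contradiction purely set-theoretic, which some readers may find conceptually illuminating even if it is a bit longer.
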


\begin{proof}  If $HD_{\mathcal{T}}(a,\mu) = 0$, then there exists
$f_n \in \ell_0$ such that
\begin{align}
\lim_{n \rightarrow \infty} P(t_{ f_n}(X) \ge t_{f_n}(a)) =0.
\end{align}
By taking a subsequence, we may assume that
\begin{align}
\sum_{ n \ge 1}P( t_{f_n}(X) \ge t_{f_n}(a)) < \infty,
\end{align}
and therefore 
$P(t_{f_n}(X) \ge t_{f_n}(a)~ {\rm  {i.o.}}) = 0$. 

We now connect this with $\Lambda$ and the fact that $\Lambda(a)$ is an admissible translate. For this purpose for $n\ge 1$ let $f_n=(\alpha_{1,n},\alpha_{2,n},\ldots) \in \ell_0$,  $x=(x_1,x_2,\cdots) \in \mathbb{R}^{\infty}$, and  $ \langle f_n,x\rangle =\sum_{k \ge 1}\alpha_{k,n}x_k,$. We thus have 

\begin{align}\label{UseBorCant}
&P(t_{f_n}(X)- t_{f_n}(a)\ge 0~ {\rm i.o.})= P(\langle f_n, \Lambda(X)- \Lambda(a) \rangle \ge 0~{\rm{i.o.}})\\
& =\mu(u\in B \colon \langle f_n, \Lambda(u)- \Lambda(a) \rangle \ge 0~{\rm{i.o.}})=\mu^{\Lambda}(\alpha\in \mathbb{R}^{\infty} \colon \langle f_n, \alpha- \Lambda(a) \rangle \ge 0~{\rm{i.o.}})\notag\\
&=\mu^{\Lambda}(\beta+\Lambda(a)\in \mathbb{R}^{\infty} \colon \langle f_n,  \beta \rangle \ge 0~{\rm{i.o.}})=(\mu^{\Lambda})_{-\Lambda(a)}(\gamma\in \mathbb{R}^{\infty} \colon \langle f_n, \gamma \rangle \ge 0~{\rm{i.o.}})=0.\notag
\end{align}
Further,
\begin{align*}&(\mu^{\Lambda})_{-\Lambda(a)}(\gamma\in \mathbb{R}^{\infty} \colon \langle f_n, \gamma \rangle \ge 0~{\rm{i.o.}})=0\\
&\text{if and only if } (\mu^{\Lambda})_{-\Lambda(a)}(\gamma\in \mathbb{R}^{\infty} \colon \langle f_n, \gamma \rangle < 0,\ \text{eventually})=1.
\end{align*}

\n But, since $\Lambda(a)=(t_1(a),t_2(a),\cdots)$ is an admissible translate for the probability $\mu^{\Lambda}$ by Lemma \ref{-x adm} we also have $-\Lambda(a)$ is an admissible translate and consequently 
\begin{align}\label{UseAdmTransl}
1=\mu^{\Lambda}(\gamma\in \mathbb{R}^{\infty} \colon \langle f_n, \gamma \rangle < 0,\ \text{eventually})
\end{align}
By symmetry of the measure $\mu^{\Lambda}$ we also have 
\begin{align}&1=\mu^{\Lambda}(\gamma\in \mathbb{R}^{\infty} \colon \langle f_n, \gamma \rangle > 0,\ \text{eventually}),
\end{align}
which yields a contradiction.
\end{proof}

\n{\bf Proof of Theorem \ref{nasc-indep} assuming (A-III).} As before, Theorem \ref{necessity-zero-depth} shows (\ref{series-for-a}) is necessary for (\ref{thm2hd>0}). Hence we turn to the converse, showing 
\begin{align}\label{2.33}
\sum_{k\ge 1} \frac{t_k^2(a)}{\lambda_k^2} < \infty 
\end{align}
implies
\begin{align}\label{2.34}
HD_{\mathcal{T}}(a,\mu) >0. 
\end{align}

Since (\ref{phi-var}) holds, Lemma \ref{move-R-infty} will  show (\ref{2.34}) for $a \in B$ satisfying (\ref{2.33}) provided we show $\Lambda(a)=(t_1(a),t_2(a),\ldots)$ is an  admissible translate of $\mu^{\Lambda}$, where $\mu=\mathcal{L}(X).$

This follows using Kakutani's  result on the equivalence of infinite product measures as in \cite{shepp-admissible}. That is,  if $\mu_{k} = \mathcal{L}(t_k(X))$ and  $\nu_{k} = \mathcal{L}(t_k(X)+t_k(a))$ are mutually absolutely continuous for $k\ge1$, then
$\Lambda(a)$ is an admissible translate for $\mu^{\Lambda}$ if and only if
\begin{align}\label{2.35}
H(\mu^{\Lambda}, \mu^{\Lambda+\Lambda(a)})= \prod_{k=1}^{\infty} H(\mu_{k},\nu_{k})>0, 
\end{align}
where $\mu^{\Lambda+\Lambda(a)}= \mathcal{L}(\Lambda(X) + \Lambda(a))$ and
\begin{align}
H(\mu_{k},\nu_{k})= \int_{\mathbb{R}} (\frac{d\mu_{k}}{dx}\frac{d\nu_{k}}{dx})^{\frac{1}{2}}dx.
\end{align}
Now 
\begin{align}
\frac{d\mu_{k}}{dx}(s)=\frac{\phi(\frac{s}{\lambda_k})}{\lambda_k},
\end{align}
\begin{align}
\frac{d\nu_{k}}{dx}(s)=\frac{\phi(\frac{s-t_k(a)}{\lambda_k})}{\lambda_k},
\end{align}
and hence
\begin{align}\label{2.36}
H(\mu_{k},\nu_{k})= \int_{\mathbb{R}} (\frac{\phi(\frac{s}{\lambda_k})}{\lambda_k}\frac{\phi(\frac{s-t_k(a)}{\lambda_k})}{\lambda_k})^{\frac{1}{2}}ds=\int_{\mathbb{R}} (\phi(t)\phi(t-\frac{t_k(a)}{\lambda_k}))^{\frac{1}{2}}dt .
\end{align}
Now (\ref{2.33}) and $\phi$ having finite information, since (\ref{FisherInformation}) holds,  combine with part (ii) of Theorem 1 of \cite{shepp-admissible} to imply $\Lambda(a)/\lambda \equiv(\frac{t_1(a)}{\lambda_1},\frac{t_2(a)}{\lambda_2},\ldots)$ is an admissible translate of $P^{Y}$, where $Y= \Lambda(X)/\lambda$. Therefore, Kakutani's theorem implies
\begin{align}\label{2.37}
H(P^{Y},P^{Y+\Lambda(a)/\lambda} )>0, 
\end{align}
and since an easy calculation shows
\begin{align}\label{2.38}
H(P^{Y},P^{Y+\Lambda(a)/\lambda} )=\prod_{k=1}^{\infty}\int_{\mathbb{R}} (\phi(t)\phi(t-\frac{t_k(a)}{\lambda_k}))^{\frac{1}{2}}dt, 
\end{align}
(\ref{2.36}), (\ref{2.37}) and (\ref{2.38}) combine to imply (\ref{2.35}). Thus Kakutani's theorem implies $\Lambda(a)$ is an admissible translate of $\mu^{\Lambda}$ and Lemma \ref{move-R-infty} completes the proof.
\bigskip
\bigskip

\subsection{Proof of Theorem \ref{emp-depth}}
Since $\{t_k: k \geq 1\} \subseteq \mathcal{T}$ we have
\begin{align}\label{2.39}
HD_{\mathcal{T}}(a,\mu_n) \le \inf_{k \ge 1} \frac{1}{n} \sum_{j=1}^n I(t_k(X_j) \ge t_k(a)),  
\end{align}
and hence it suffices to show for all $n \ge 1$ and $a\in B$ satisfying (\ref{thm3ReqCond})
\begin{align}\label{2.40}
\inf_{k \ge 1} Z_{n,k}(a) = 0, 
\end{align}
with probability one, where
\begin{align}\label{2.41}
Z_{n,k}(a)=  \frac{1}{n} \sum_{j=1}^n I(t_k(X_j) \ge t_k(a)).  
\end{align}
Since the random variables $\{t_k(X): k \ge 1\}$ are independent, the sequences $\{t_k(X_j): k \ge 1\}$ consist of independent random variables, and as sequences are independent and identically distributed for $j \ge 1$. Hence fix $n \ge 1$ and assume $a \in B$ satisfies (\ref{thm3ReqCond}). 

Then, the sequence $\{Z_{n,k}(a): k \ge 1\}$ consists of independent random variables. Furthermore, (\ref{thm3-assump}) then
implies there exists $\delta>0$ and $\{k_i: i \ge 1\}$ a subsequence of the positive integers such that  for all $k_i$ 
\begin{align}
P(t_{k_i}(X)< t_{k_i}(a)) \ge \delta.
\end{align}
Therefore, for all $n \ge 1$
\begin{align}
P(Z_{n,k_i}(a) =0) \ge \delta^n.
\end{align}
Hence for $n$ fixed, the independence in $k\ge 1$ and the Borel-Cantelli lemma implies
\begin{align}\label{2.42}
P(Z_{n,k_i}(a)=0 ~{\rm { i.o.~in~i}})=1.
\end{align}
Now (\ref{2.42}) implies (\ref{2.40}) with probability one, and the theorem is proved.

\section{ Examples and explicit forms of half space depth} \label{examples}

If $\{t_k(X): k \ge 1 \}$ is a sequence of mean zero Gaussian random variables with variances $ \sigma_k^2 \in (0,\infty), k \ge 1$, then Theorems \ref{necessity-zero-depth} and \ref{nasc-indep}  readily apply to provide necessary and sufficient conditions for the half space depth in (\ref{thm1-conclusion})  to be positive. Theorem \ref{emp-depth} and Corollary \ref{consist} also provide information about the empirical depth. However, in this situation we can obtain an explicit formula for this depth. In fact, the formula we obtain holds when the sequence  $\{t_k(X): k \ge 1 \}$ consists of i.i.d. scaled symmetric stable random variables, which is interesting since among the non-degenerate stable random variables only the Gaussians have a variance and our conditions are in terms of second and higher moments. We also obtain a formula for the Tukey half space depth for any centered Gaussian measure on a separable Banach space $B$ when the depth is computed using $B^*$.
Finally, we provide some information when the $\{t_k(X): k \ge 1 \}$ are i.i.d. Rademacher random variables, but in this case the results are less explicit. 

\bigskip

\subsection{An explicit formula for half region depth for stables.} To obtain an explicit formula for the half space depth for symmetric stable random variables we need the following lemma on the sequence spaces $\ell_p, 0<p\le 2$. It is essentially proved in \cite{wheeden-zygmund}, pp. 128-129, and hence we omit further details.

\begin{lem} If $1<p<\infty, \frac{1}{p} + \frac{1}{q}=1,$ and $y=\{y_j: j\ge 1\} \in \mathbb{R}^{\infty}$, then
\begin{align}\label{3.1}
\sup_{x \in \ell_0,||x||_p =1}  \sum_{j \ge 1} |x_jy_j| =||y||_q, 
\end{align}
where $||y||_q=( \sum_{j \ge 1} |y_j|^q)^{\frac{1}{q}}$ could well be infinity. 
If $0<p\le 1,$ then (\ref{3.1}) holds with $q=\infty$ and $||y||_{\infty}= \sup_{k \ge 1}|y_k|$, which again could be infinite.
\end{lem}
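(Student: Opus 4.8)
The plan is to prove the two inequalities ``$\le$'' and ``$\ge$'' separately in each range of $p$, taking care throughout to allow $\|y\|_q=\infty$ so that the identity in (\ref{3.1}) reads correctly as an equality of extended reals. If $y\equiv 0$ both sides are zero, so I may assume $y\ne 0$.

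For $1<p<\infty$ I would first obtain the bound ``$\le$'' directly from H\"older's inequality: for every $x\in\ell_0$ with $\|x\|_p=1$ the sum $\sum_{j\ge1}|x_jy_j|$ is finite and bounded by $\|x\|_p\|y\|_q=\|y\|_q$, which is trivially valid when $\|y\|_q=\infty$. For the reverse bound I would exhibit explicit finitely supported competitors: for each $N$ with $S_N:=\sum_{k=1}^N|y_k|^q>0$, set $x_j^{(N)}=|y_j|^{q-1}S_N^{-1/p}$ for $j\le N$ and $x_j^{(N)}=0$ otherwise. Using the conjugacy identity $(q-1)p=q$ one checks that $\|x^{(N)}\|_p=1$ and that $\sum_{j\ge1}|x_j^{(N)}y_j|=S_N^{1/q}=(\sum_{j=1}^N|y_j|^q)^{1/q}$. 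Letting $N\to\infty$ pushes this last quantity up to $\|y\|_q$, whether finite or infinite, so the supremum is at least $\|y\|_q$ and equality follows.

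For $0<p\le1$, where $q=\infty$, I would deduce ``$\le$'' from the elementary monotonicity $\|x\|_1\le\|x\|_p$ valid in this range, giving $\sum_{j\ge1}|x_jy_j|\le(\sup_k|y_k|)\sum_{j\ge1}|x_j|\le\|y\|_\infty\|x\|_p=\|y\|_\infty$. The matching bound ``$\ge$'' comes from testing against the coordinate vectors $e_k$, each of which satisfies $\|e_k\|_p=1$ and yields $\sum_{j\ge1}|(e_k)_jy_j|=|y_k|$; taking the supremum over $k$ recovers $\sup_k|y_k|=\|y\|_\infty$, an argument again insensitive to whether $\|y\|_\infty$ is finite.

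I do not anticipate a genuine obstacle, since this is the standard $\ell_p$--$\ell_q$ duality restricted to the finitely supported sequences $\ell_0$; the only points needing attention are the normalization $\|x^{(N)}\|_p=1$, which rests on $(q-1)p=q$, and the bookkeeping ensuring the truncation argument forces both sides of (\ref{3.1}) to be infinite simultaneously when $y\notin\ell_q$.
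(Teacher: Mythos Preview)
Your argument is correct and is the standard $\ell_p$--$\ell_q$ duality proof: H\"older for the upper bound, the truncated extremizers $x_j^{(N)}=|y_j|^{q-1}S_N^{-1/p}$ for the lower bound when $1<p<\infty$, and the $\|\cdot\|_1\le\|\cdot\|_p$ monotonicity together with coordinate vectors when $0<p\le1$. The paper itself does not give a proof at all; it simply cites \cite{wheeden-zygmund}, pp.~128--129, and omits the details. What you have written is exactly the classical argument one finds in that reference (adapted to finitely supported sequences rather than functions), so your proposal fills in precisely what the paper chose to suppress.
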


\begin{notat} If $b=\{b_k: k \ge 1\} \in \mathbb{R}^{\infty}$ and $c=\{c_k: k \ge 1\}$ is a strictly positive sequence, we will write $b/c$ to denote the sequence $\{b_k/c_k: k \ge 1\}$, 
\end{notat}

\begin{prop} Let $S$ be a non-degenerate symmetric $p$-stable random variable where $0<p\le 2,$ and for $c_k \in (0,\infty), k\ge 1,$ assume $\{t_k(X): k \ge 1\}$ are independent with $\mathcal{L}(t_k(X))= \mathcal{L}(c_kS).$ If $\mathcal{T}$ denotes all finite linear combinations of $\{t_k: k \ge 1\}$ and for $a \in B$ we let $\tau(a)=\{t_k(a): k \ge 1\} $,
then 
\begin{align}\label{3.2}
HD_{\mathcal{T}}(a,\mu)= 1-P(S \le ||\tau(a)/c||_q), 
\end{align}
where $q=\infty$ for $0<p\le 1$, $\frac{1}{p} +\frac{1}{q}=1$ for $1<p\le 2$. 
\end{prop}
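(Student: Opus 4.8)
The plan is to start from the parameterized form of the depth in (\ref{hsdell0}), namely $HD_{\mathcal{T}}(a,\mu)= \inf_{\alpha \in \ell_0} P(t_{\alpha}(X) \ge t_{\alpha}(a))$, and to reduce the infimum over the infinite-dimensional index set $\ell_0$ to the scalar extremal problem already solved by the preceding lemma. Writing $t_k(X)=c_k S_k$ with $\{S_k\}$ i.i.d. copies of $S$, the symmetric $p$-stability of $S$ gives, for each finite linear combination, the equality in distribution $t_{\alpha}(X)=\sum_k \alpha_k c_k S_k \stackrel{d}{=} ||\alpha c||_p\, S$, where $\alpha c=\{\alpha_k c_k\}$; this is immediate at the level of characteristic functions since $\prod_k e^{-\gamma|\alpha_k c_k t|^p}=e^{-\gamma(|t|\,||\alpha c||_p)^p}$. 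Consequently, for every $\alpha$ with $||\alpha c||_p>0$,
$$
P(t_{\alpha}(X) \ge t_{\alpha}(a))= P\Big(S \ge \frac{t_{\alpha}(a)}{||\alpha c||_p}\Big).
$$

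Next I would pass the infimum through the tail function. Because $S$ is non-degenerate symmetric $p$-stable, its law is absolutely continuous with continuous distribution function, so $x \mapsto P(S \ge x)$ is continuous and non-increasing; hence
$$
\inf_{\alpha} P\Big(S \ge \frac{t_{\alpha}(a)}{||\alpha c||_p}\Big)= P\Big(S \ge \sup_{\alpha}\frac{t_{\alpha}(a)}{||\alpha c||_p}\Big),
$$
the supremum being over $\alpha \in \ell_0$ with $||\alpha c||_p>0$ (the choice $\alpha=0$ only yields the value $1$ and is irrelevant to the infimum). It then remains to evaluate this supremum. Substituting $x_k=\alpha_k c_k$ and $y_k=t_k(a)/c_k$, so that $y=\tau(a)/c$, turns the ratio into $\big(\sum_k x_k y_k\big)/||x||_p$, and homogeneity reduces matters to $\sup_{x \in \ell_0,\,||x||_p=1}\sum_k x_k y_k$. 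Since the sign of each $x_k$ is free and the constraint $||x||_p=1$ is sign-invariant, this equals $\sup_{x \in \ell_0,\,||x||_p=1}\sum_k |x_k y_k|$, which by the preceding lemma (equation (\ref{3.1})) is exactly $||y||_q=||\tau(a)/c||_q$, with $q$ the conjugate exponent for $1<p\le 2$ and $q=\infty$ for $0<p\le 1$. Combining the three displays gives $HD_{\mathcal{T}}(a,\mu)=P(S \ge ||\tau(a)/c||_q)=1-P(S \le ||\tau(a)/c||_q)$, which is (\ref{3.2}); the last equality uses that $S$ has no atoms.

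I expect the main obstacle to be the commuting of the infimum with the supremum, i.e. the step $\inf_\alpha P(S\ge g(\alpha))=P(S\ge \sup_\alpha g(\alpha))$ with $g(\alpha)=t_{\alpha}(a)/||\alpha c||_p$, which hinges on the atomlessness of the stable law and needs care because the supremum need not be attained; I would justify it by choosing a maximizing sequence $\alpha_n$ with $g(\alpha_n)\to \sup_\alpha g(\alpha)$ and invoking continuity of the tail to get the reverse inequality. A secondary point to handle cleanly is the boundary behavior: when $\tau(a)=0$ both sides reduce to $1/2$ by symmetry, and when $||\tau(a)/c||_q=\infty$ the supremum is $+\infty$, forcing the depth to equal $0$, again matching the right-hand side. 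These cases, together with the elementary sign argument converting $\sum_k x_k y_k$ into $\sum_k |x_k y_k|$, are the only places where the reduction to (\ref{3.1}) requires a brief verification.
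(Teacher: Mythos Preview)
Your proposal is correct and follows essentially the same route as the paper: both use $p$-stability to write $t_\alpha(X)\stackrel{d}{=}\|\alpha c\|_p\,S$, exploit continuity of the stable distribution to exchange the infimum for a supremum, substitute $x_k=\alpha_k c_k$, use a sign argument to pass from $\sum_k x_k y_k$ to $\sum_k |x_k y_k|$, and then invoke the duality lemma (\ref{3.1}). The only cosmetic difference is that the paper first restricts to $\alpha$ with $t_\alpha(a)>0$ by appealing to the symmetry of $\{t_k(X)\}$, whereas you keep all nonzero $\alpha$ and absorb that reduction into the sign-freedom step; the two are equivalent.
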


\begin{rem} If  $||\tau(a)/c||_q=\infty$ in (\ref{3.2}), then $HD_{\mathcal{T}}(a,\mu)=0$ for $0<p \le 2$. Moreover, since $c$ is fixed, the depth is continuous as a function of the sequence $\tau(a)/c$ in the $q$-norm when restricted to the set where $||\tau(a)/c|||_q<\infty$, but it is highly discontinuous with respect to the product topology on $\mathbb{R}^{\infty}.$  If $p=2$, $S$ has variance one, and $c_k=1, k \ge 1$, then for any $a \in B$
\begin{align}\label{3.3}
HD_{\mathcal{T}}(a,\mu)= 1-\Phi(||\tau(a)||_2), 
\end{align}
where $\Phi(\cdot)$ is the distribution function of a centered Gaussian random variable with variance one. Also, if $1<p\le 2$, then it is easy to see from Remark 4 that consistency fails at all $a \in B$ where the depth is strictly positive.
\end{rem}

\begin{proof} Since the sequence $\{t_k(X):k \ge 1\}$ is independent and symmetric, it  suffices to show 
\begin{align}
 \inf_{\alpha \in \ell_0, t_{ \alpha}(a) >0} P(\frac{t_{\alpha}(X)}{t_{\alpha}(a)} \ge 1)=1-P(S \le ||\tau(a)/c||_q) 
\end{align}
Now 
\begin{align}
\frac{t_{\alpha}(X)}{t_{\alpha}(a)}= \frac{\sum_{k\ge 1}\alpha_kt_k(X)}{\sum_{k \ge 1} \alpha_kt_k(a)},
\end{align}
and using independence and that the random variables $\{t_k(X), k \ge 1\}$ are $p$-stable, we therefore  have
\begin{align}
\mathcal{L}(\frac{t_{\alpha}(X)}{t_{\alpha}(a)})= \mathcal{L}(\frac{ (\sum_{k \ge 1}|\alpha_k c_k|^p)^{\frac{1}{p}} }{\sum_{k\ge 1}\alpha_kt_k(a)  } S).
\end{align}
Hence,
\begin{align}
 \inf_{\alpha \in \ell_0,t_{ \alpha}(a) >0} P(\frac{t_{\alpha}(X)}{t_{\alpha}(a)} \ge 1)=  \inf_{\alpha \in \ell_0, t_{\alpha}(a) >0} P(S \ge \frac{ \sum_{k\ge 1}\alpha_kt_k(a) }{(\sum_{k\ge 1}|\alpha_kc_k|^p)^{\frac{1}{p} }}).
\end{align}
Letting $\beta_k=|\alpha_k|{\rm{sign}}(t_k(a)), k \ge 1,$ and using the continuity of the distribution of $S$, we have
\begin{align}
 \inf_{\alpha \in \ell_0, t_{\alpha}(a) >0} P(\frac{t_{\alpha}(X)}{t_{\alpha}(a)} \ge 1)=P(S \ge  \sup_{\beta \in \ell_0,||\beta||_{\infty}>0}  \frac{ \sum_{k \ge 1}|\beta_kt_k(a)| }{(\sum_{k\ge 1}|\beta_k c_k|^p)^{\frac{1}{p} }}). 
 \end{align}
Setting 
\begin{align}
\gamma_k=\frac{\beta_kc_k}{(\sum_{k \ge 1}|\beta_k c_k|^p)^{\frac{1}{p} }}, k \ge 1,
\end{align}
we have
\begin{align}
 \sup_{\beta \in \ell_0,||\beta||_{\infty}>0}  \frac{ \sum_{k \ge 1}|\beta_kt_k(a)| }{(\sum_{k\ge 1}|\beta_k c_k|^p)^{\frac{1}{p} }}=  
 \sup_{\{\gamma \in \ell_0,||\gamma||_p=1\}}   \sum_{k \ge 1}|\gamma_kt_k(a)/c_k| = ||\tau(a)/c||_q, 
\end{align}
where the last equality follows from Lemma 4 provided $p$ and $q$ are related as indicated in the proposition.
\end{proof}

Our next result obtains the analogue of the p=q=2 case of Proposition 1 when $X$ is a centered Gaussian random vector $X$ taking values in a separable Banach space $B$ and 
the half space depth is given by 
\begin{align} \label{3.4}
HD_{B^*}(a,\mu)= \inf_{t \in B^{*}} P( t(X) \ge t(a)) 
\end{align}
where $B^*$ is the dual of $B$. As before, we assume $X$ is defined on the probability space $(\Omega, \mathcal{F},P)$ and is measurable from $\cl F$ to the Borel subsets of $B$, and let $\mu = {\cl L}(X)$. 

Let  $||\cdot||$ and $||\cdot||_{B^*}$ denote the norms on $B$ and $B^*$, respectively. Then, by the Fernique-Landau-Shepp result \cite{fernique-integrability}, \cite{landau-shepp} for some $s>0$
\begin{align}
\int_B\exp\{s||x||^2\}d\mu(x)<\infty,  
\end{align}
and hence the linear  map $S: B^*\rightarrow B$ given by the Bochner integral
\begin{align}
Sf= \int_Bxf(x)d\mu(x) 
\end{align}
is continuous from $B^*$ to $B$. The Hilbert space $H_{\mu}$ generating  $\mu$ is given by the completion of the range of $S$ with respect to the norm $||\cdot||_{\mu}$ obtained from the inner product 
\begin{align}
\langle Sf,Sg \rangle_{\mu}= \int_Bf(x)g(x)d\mu(x).
\end{align}
Moreover, $H_{\mu}$ can be viewed as a subset of $B$, since for $x \in H_{\mu}$
\begin{align}
\|x\| \leq \sigma(\mu)\|x\|_{\mu} 
\end{align}
where
\begin{align}\label{3.5}
\sigma(\mu) \equiv \sup_{\|f\|_{B^*}\leq 1} \left(\int_B f^2(x)d\mu(x)\right)^{1/2} < \infty.
\end{align}
It is also well known that the support of the Gaussian measure $\mu$ is given by the closure of $H_{\mu}$ in $B$, which we denote by $\bar H_{\mu}$, and that $\mu(H_{\mu})=0$ when $H_{\mu}$ is infinite dimensional. Of course, $H_{\mu}=\bar H_{\mu}$ when $H_{\mu}$ is finite dimensional, so in that case $\mu(H_{\mu})=1.$ 

Additional properties relating $H_{\mu}, \langle \cdot, \cdot\rangle_{\mu},B,$ and the measure $\mu$ can be found in Lemma 2.1 of \cite{view}. However, the above suffice to state our proposition on the half space depth in (\ref{3.4}) for Gaussian measures, and for the following useful lemma used in its proof. The proof of the lemma is in \cite{view}, where it appears in a slightly more general form as Lemma 2.2. Once we have this lemma, the remainder of the proof follows as in Proposition 1.
\bigskip

\begin{lem} Let $\mu$ be a centered Gaussian measure on $B,$ and assume $H_{\mu}$ and $||\cdot||_{\mu}$ are defined as above. If
\begin{align}\label{3.6}
\theta(x)= \sup_{f \in B^*,\int_Bf^2d\mu \le 1} f(x), 
\end{align}
then $\theta(x)=||x||_{\mu}$ for $x \in H_{\mu}$, and $\theta(x)= \infty$ for $x \in B-H_{\mu}$.
\end{lem}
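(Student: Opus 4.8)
The plan is to route everything through the Hilbert space $L^2(\mu)$ and exploit the isometry built into the definition of $H_\mu$. Writing $\|f\|_{L^2(\mu)}^2=\int_B f^2\,d\mu$, the constraint defining $\theta$ is precisely $\|f\|_{L^2(\mu)}\le 1$, and since that constraint set is symmetric we may read $\theta(x)=\sup_{\|f\|_{L^2(\mu)}\le 1}|f(x)|$. The single computation on which everything rests is that applying $f\in B^*$ to the Bochner integral $Sg$ gives $f(Sg)=\int_B f(x)g(x)\,d\mu(x)=\langle Sf,Sg\rangle_\mu$; in particular $\|Sf\|_\mu=\|f\|_{L^2(\mu)}$, so $S$ extends to a linear isometry $\bar S$ from the closure $\overline{B^*}$ of $B^*$ in $L^2(\mu)$ onto $H_\mu$. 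First I would upgrade this reproducing identity to all of $H_\mu$: for $x\in H_\mu$ and $f\in B^*$ one has $f(x)=\langle Sf,x\rangle_\mu$. Indeed, choosing $Sg_n\to x$ in $\|\cdot\|_\mu$, the embedding inequality $\|y\|\le\sigma(\mu)\|y\|_\mu$ forces $Sg_n\to x$ in $B$ as well, so $f(Sg_n)\to f(x)$ by continuity of $f$ while $\langle Sf,Sg_n\rangle_\mu\to\langle Sf,x\rangle_\mu$ by continuity of the inner product, and the identity passes to the limit.

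For $x\in H_\mu$ the lemma then follows quickly. Cauchy--Schwarz gives $f(x)=\langle Sf,x\rangle_\mu\le\|Sf\|_\mu\|x\|_\mu\le\|x\|_\mu$ whenever $\|f\|_{L^2(\mu)}\le 1$, so $\theta(x)\le\|x\|_\mu$. For the reverse inequality (assuming $x\ne 0$), take $Sg_n\to x$ in $H_\mu$ and set $f_n=g_n/\|Sg_n\|_\mu\in B^*$, so that $\|f_n\|_{L^2(\mu)}=\|Sf_n\|_\mu=1$ and $f_n(x)=\langle Sf_n,x\rangle_\mu\to\|x\|_\mu$; hence $\theta(x)\ge\|x\|_\mu$, and $\theta(x)=\|x\|_\mu$ (the case $x=0$ being trivial).

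The harder case is $x\in B-H_\mu$, which I would treat by contraposition: assume $\theta(x)<\infty$ and deduce $x\in H_\mu$. By the symmetry above, $|f(x)|\le\theta(x)\|f\|_{L^2(\mu)}$ for every $f\in B^*$, so $f\mapsto f(x)$ is bounded for the $L^2(\mu)$-seminorm and extends to a bounded linear functional on the Hilbert space $\overline{B^*}\subseteq L^2(\mu)$. The Riesz representation theorem then supplies $h\in\overline{B^*}$ with $f(x)=\langle f,h\rangle_{L^2(\mu)}$ for all $f\in B^*$. Setting $y=\bar S h\in H_\mu$, the extended reproducing identity gives $f(y)=\langle Sf,\bar S h\rangle_\mu=\langle f,h\rangle_{L^2(\mu)}=f(x)$ for every $f\in B^*$; since $B^*$ separates the points of $B$ this forces $x=y\in H_\mu$, the desired contradiction, so $\theta(x)=\infty$ off $H_\mu$.

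I expect the main obstacle to be the bookkeeping between two topologies: $H_\mu$ is defined as an abstract $\|\cdot\|_\mu$-completion but must be handled as a concrete subset of $B$ on which the functionals $f$ act. The inequality $\|y\|\le\sigma(\mu)\|y\|_\mu$ is exactly the device that transfers $\|\cdot\|_\mu$-convergence to $B$-convergence and thereby licenses every limit interchange above; the one genuinely substantive step is the $L^2(\mu)$-boundedness-plus-Riesz argument that manufactures the representative $y$ in the infinite case.
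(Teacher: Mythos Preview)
Your argument is correct and self-contained. The paper itself does not prove this lemma: it simply cites Lemma~2.2 of \cite{view} and moves on, so there is nothing in the paper to compare against line by line. What you have written is the standard Cameron--Martin computation one would expect in that reference: the isometry $\|Sf\|_\mu=\|f\|_{L^2(\mu)}$, the reproducing identity $f(x)=\langle Sf,x\rangle_\mu$ on $H_\mu$ (pushed from $S(B^*)$ to all of $H_\mu$ via the embedding inequality $\|y\|\le\sigma(\mu)\|y\|_\mu$), Cauchy--Schwarz plus approximation for $x\in H_\mu$, and Riesz representation on $\overline{B^*}\subseteq L^2(\mu)$ together with the fact that $B^*$ separates points for the contrapositive when $x\notin H_\mu$. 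The only point worth flagging is implicit in your Riesz step: to know that $f\mapsto f(x)$ descends to a well-defined functional on the image of $B^*$ in $L^2(\mu)$ you need that $\|f\|_{L^2(\mu)}=0$ forces $f(x)=0$, but this is immediate from the bound $|f(x)|\le\theta(x)\|f\|_{L^2(\mu)}$ you have already recorded under the hypothesis $\theta(x)<\infty$.
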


\begin{prop}\label{gaussian} Let $X$ be a B-valued centered Gaussian random vector as above, and assume $\Phi(\cdot)$ is the distribution function of a mean zero-variance one Gaussian random variable. Then,
\begin{align}\label{3.7}
HD_{B^*}(a,\mu)= 1-\Phi(||a||_{\mu}), a \in H_{\mu}, 
\end{align}
and
\begin{align}\label{3.8}
HD_{B^*}(a,\mu)=0, a \in B-H_{\mu}. 
\end{align}
Furthermore,  
\begin{align}\label{3.9}
\mu(a \in B: HD_{B^*}(a,\mu)=0)= 0~ {\rm{or}}~ 1 
\end{align} 
according as  $H_{\mu}$ is a finite dimensional Hilbert space or an infinite dimensional Hilbert space. In addition, if $H_{\mu}$ is infinite dimensional, then for all $a \in H_{\mu}$ the empirical half space depth given in (\ref{ehsd}) or (\ref{ehsd'}) with $\mathcal{T}=B^{*}$ is zero with $P$-probability one. Hence the empirical half depth fails to approximate the true half space depth for all $a \in H_{\mu}$ in this setting, i.e. consistency fails at all such points. 
\end{prop}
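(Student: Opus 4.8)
The plan is to reduce everything to the functional $\theta(\cdot)$ of Lemma 5 and then mimic the computation carried out in Proposition 1. First I would parameterize the dual functionals by their $L^2(\mu)$-norm: since $X$ is centered Gaussian, for each $f \in B^*$ the random variable $f(X)$ is mean zero Gaussian with variance $\int_B f^2 \, d\mu$, so $P(f(X) \ge f(a)) = 1 - \Phi(f(a)/(\int_B f^2 d\mu)^{1/2})$ whenever the variance is nonzero (functionals with zero variance are $\mu$-a.s.\ constant and contribute trivially to the infimum). Taking the infimum over $f \in B^*$ turns into a supremum inside $\Phi$, namely $HD_{B^*}(a,\mu) = 1 - \Phi\bigl(\sup_{f \in B^*,\ \int_B f^2 d\mu \le 1} f(a)\bigr) = 1 - \Phi(\theta(a))$, using continuity and monotonicity of $\Phi$. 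Lemma 5 then gives $\theta(a) = \|a\|_\mu$ for $a \in H_\mu$ and $\theta(a) = \infty$ for $a \in B - H_\mu$, which yields (\ref{3.7}) and (\ref{3.8}) immediately since $\Phi(\infty) = 1$.

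Next I would establish the dichotomy (\ref{3.9}). By (\ref{3.7})--(\ref{3.8}) the set $\{a : HD_{B^*}(a,\mu) = 0\}$ is exactly $B - H_\mu$, so $\mu(a : HD_{B^*}(a,\mu)=0) = \mu(B - H_\mu) = 1 - \mu(H_\mu)$. The stated facts about Gaussian support then finish this part: when $H_\mu$ is infinite dimensional $\mu(H_\mu) = 0$, giving the value $1$, and when $H_\mu$ is finite dimensional $\bar H_\mu = H_\mu$ carries full mass so $\mu(H_\mu) = 1$, giving the value $0$.

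For the final empirical-depth assertion I would apply Theorem \ref{emp-depth}. Fix a countable dense sequence $\{f_k\} \subseteq B^*$ in the relevant topology (or any countable subfamily whose associated functionals $t_k = f_k$ detect the infinite dimensionality of $H_\mu$); since $\mathcal{T} = B^*$ certainly contains such a $\{t_k\}$, it suffices to verify the hypotheses of Theorem \ref{emp-depth} for this subfamily. The random variables $t_k(X) = f_k(X)$ are mean zero Gaussian with finite positive variances $\sigma_k^2 = \int_B f_k^2 d\mu$, and for independent centered Gaussians condition (\ref{thm3-assump}) holds because whenever $c_k/\sigma_k \to 0$ one has $P(t_k(X) < c_k) = \Phi(c_k/\sigma_k) \to 1/2 > 0$. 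The point where care is needed is the requirement (\ref{thm3ReqCond}), namely $t_k(a)/\sigma_k \to 0$: I would choose the $f_k$ so that, after normalizing to $\sigma_k = 1$, the sequence $f_k(a)$ tends to zero for every $a \in H_\mu$ while the $f_k(X)$ remain independent. This is exactly where infinite-dimensionality of $H_\mu$ enters, and verifying it is the main obstacle. One clean way is to take an orthonormal basis $\{h_k\}$ of $H_\mu$ and the corresponding dual functionals $f_k$ with $f_k(X) = \langle X, h_k\rangle_\mu$ i.i.d.\ $N(0,1)$; then $f_k(a) = \langle a, h_k\rangle_\mu \to 0$ for $a \in H_\mu$ by Bessel's inequality, so (\ref{thm3ReqCond}) holds and Theorem \ref{emp-depth} forces the empirical depth to be zero with probability one, while (\ref{3.7}) shows the true depth is positive, establishing the failure of consistency at every $a \in H_\mu$.
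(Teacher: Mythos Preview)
Your proposal is correct and follows essentially the same route as the paper: both reduce the depth formula to $1-\Phi(\theta(a))$ via the Gaussianity of $f(X)$ and then invoke Lemma~5, and both handle the empirical part by producing $t_k\in B^*$ with $\{St_k\}$ orthonormal in $H_\mu$ so that the $t_k(X)$ are i.i.d.\ $N(0,1)$ and $t_k(a)\to 0$ for $a\in H_\mu$. The only cosmetic difference is that the paper redoes the Borel--Cantelli step inline (``as in the proof of Theorem~\ref{emp-depth}'') whereas you cite Theorem~\ref{emp-depth} directly; also, be careful to choose your orthonormal system from $S(B^*)$ (e.g.\ via Gram--Schmidt on a sequence in $B^*$) rather than starting from an arbitrary orthonormal basis of $H_\mu$, since not every $h_k\in H_\mu$ need lie in the range of $S$.
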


\begin{proof} As mentioned following (\ref{3.5}), since $\mu$ is a Gaussian measure with mean vector zero, $\mu(H_{\mu})$ has probability one or zero, according as  $H_{\mu}$ is a finite dimensional Hilbert space or an infinite dimensional Hilbert space, so (\ref{3.9}) follows immediately once we verify (\ref{3.7}) and (\ref{3.8}).

Since $f(x) $ is centered Gaussian with variance $ \sigma_f^2\equiv  \int_Bf^2(x)d\mu(x)$ for $f \in B^*$, it follows that
\begin{align}
HD_{B^*}(a,\mu)= \inf_{f \in B^*, f(a)>0} \mu(x:f(x) \ge f(a))= \inf_{f \in B^*, f(a)>0}[1- \Phi(f(a)/\sigma_f)].
\end{align}
Using the continuity of $\Phi$ and that it is increasing, we thus have
\begin{align}
HD_{B^*}(a,\mu)= 1- \Phi( \sup_{f \in B^*, f(a)>0} f(a)/\sigma_f).
\end{align}
Moreover, since
\begin{align}
 \sup_{f \in B^*, f(a)>0}f(a)/\sigma_f
 =\theta(a), 
 \end{align}
where $\theta(\cdot)$ is as in (\ref{3.6}), we therefore have (\ref{3.7}) and (\ref{3.8}).

If $H_{\mu}$ is infinite dimensional, then there exists a sequence $\{t_k: k \ge 1\} \subseteq B^{*}$ such that $\{St_k=S(t_k): k \ge 1\}$ are orthonormal in $H_{\mu}$,
for all $a \in H_{\mu}$,
$$
\lim_{k \rightarrow \infty} t_k(a)=0,
$$
and $\{t_k(X): k \ge 1\}$ are independent centered Gaussian random variables with variance one. Thus, for all $a \in H_{\mu}$ and $\mu_n= \frac{1}{n}\sum_{j=1}^n\delta_{X_j}$, where $X,X_1,X_2,\cdots$ are i.i.d. $B$-valued Gaussian random vectors, the empirical depth
$$
HD_{B^{*}}(a, \mu_n)= \inf_{ t \in B*} \mu_{n}(x \in B: t(x) \ge t(a))\le \inf_{k \ge 1}\frac{1}{n} \sum_{j=1}^n I(t_k(X_j) \ge t_k(a)).  
$$
Hence, as in the proof of Theorem \ref{emp-depth} for $n \ge 1$ fixed, the independence in $k \ge 1$ and the Borel-Cantelli lemma easily imply the empirical half space depth is zero with $P$-probability one.
Thus, the proposition is proved.
\end{proof}

\subsection{The Rademacher case.}\label{rad-case} The explicit results obtained in Propositions 1 and 2 depend on the scaling properties of the symmetric stable laws, and therefore are likely quite special. They also involve continuous distributions, so for contrast we examine the special discrete case where $\{t_k(X):k\ge 1\}$ are  independent Rademacher  random variables. Of course, Theorem 1 implies that 
\begin{align}\label{3.10}
HD_{\mathcal{T}}(a,\mu)=0 
\end{align}
whenever  $\sum_{k \ge 1} t_k^2(a) = \infty$, but, as can be seen from Lemma 7 below, that is not the entire story. Furthermore, although the condition (\ref{A-I(i)}) in (A-I) of Theorem \ref{nasc-indep} is not applicable, once we prove Lemma 7, it is easy to see how a suitable modification of the proof of Theorem \ref{nasc-indep} and the conditions in (A-I) allow us to identify the set where the depth is strictly positive. We also indicate how the ideas in Montgomery-Smith's paper \cite{monty-rad-sums} provides an alternative approach to obtain the results for Rademacher $\{t_k(X): k\ge 1\}$. Finally, we point out that Lemma 7 can be refined to apply to other bounded random variables, and hence Proposition 3 below has comparable analogues. For example, if $\{t_k(X): k \ge 1\}$ are independent, symmetric random variables with $\E(t_k^2(X)) \in (0,\infty)$, (A-II) holds, $P(|t_k(X)| \le b_k)=1$ where $b_k<\infty$ for all $k \ge 1$,
and $\mathcal{L}(t_k(X))$ has support in every neighborhood of $b_k$ and of $-b_k$ for all $k \ge 1$, then
\begin{align*}
\sup_{k \ge 1} \frac{|t_k(a)|}{b_k}>1~{\rm {or}}~\sum_{k \ge 1}\frac{t_k^2(a)}{\sigma_k^2} = \infty ~{\rm{imply}}~HD_{\mathcal{T}}(a,\mu)=0, 
\end{align*}
and
\begin{align*}
\sup_{k \ge 1} \frac{|t_k(a)|}{b_k}<1~{\rm {and}}~\sum_{k \ge 1}\frac{t_k^2(a)}{\sigma_k^2} < \infty~{\rm{imply}}~HD_{\mathcal{T}}(a,\mu)>0. 
\end{align*}
What happens when $\sup_{k \ge 1} \frac{|t_k(a)|}{b_k}=1$ depends on whether $\mathcal{L}((t_k(X))$ has positive mass at $b_k$, or not. 
This can be seen in the  following proposition, which  summarizes our results for Rademacher variables. Since its proof can easily be modified to obtain the previous results, those details are omitted.
\bigskip

\begin{prop} Let $\{t_k(X):k\ge 1\}$ be independent Rademacher random variables. Then,  (\ref{3.10})  holds for all $a \in B$ such that  $\sum_{k \ge 1} t_k^2(a) = \infty$  or  $\sup_{k \ge 1}|t_k(a)|>1.$ In addition, 
\begin{align}\label{3.11}
HD_{\mathcal{T}}(a,\mu)>0 
\end{align}
for all $a \in B$ such that  $\sum_{k \ge 1} t_k^2(a) < \infty$  and  $\sup_{k \ge 1}|t_k(a)|\le 1,$ and consistency fails at all such points $a \in B$.
\end{prop}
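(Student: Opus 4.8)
The plan is to prove Proposition 3 by separating into the three regimes indicated, handling the "depth is zero" assertions first and then the strictly positive case.

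\medskip
\noindent\textbf{Zero depth.} First I would dispose of the case $\sum_{k\ge1}t_k^2(a)=\infty$. For Rademacher variables $\sigma_k^2=\E(t_k^2(X))=1$, so this is exactly hypothesis (\ref{thm1series-hyp}) of Theorem \ref{necessity-zero-depth}, and $HD_{\mathcal{T}}(a,\mu)=0$ follows immediately. Next, suppose instead that $\sup_{k\ge1}|t_k(a)|>1$; pick an index $k_0$ with $|t_k(a)|>1$. Using the single functional $t_{\alpha}=\mathrm{sign}(t_{k_0}(a))\,t_{k_0}$ (so $t_{\alpha}(a)=|t_{k_0}(a)|>1$), we have $t_{\alpha}(X)=\pm1$, and therefore
\[
HD_{\mathcal{T}}(a,\mu)\le P\bigl(t_{\alpha}(X)\ge t_{\alpha}(a)\bigr)\le P\bigl(1\ge |t_{k_0}(a)|\bigr)=0,
\]
since $|t_{k_0}(a)|>1$. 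This gives (\ref{3.10}) in the second regime.

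\medskip
\noindent\textbf{Positive depth.} The substantive content is the converse: when $\sum_{k\ge1}t_k^2(a)<\infty$ and $\sup_{k\ge1}|t_k(a)|\le1$, show $HD_{\mathcal{T}}(a,\mu)>0$. Here the stable-law scaling of Propositions 1 and 2 is unavailable, so I would follow the architecture of the proof of Theorem \ref{nasc-indep} under (A-I)/(A-II). The key is the referenced Lemma 7, the analogue of Lemma \ref{using-PZ} for bounded variables: under (A-II) (which holds for Rademachers, with $c=1$), if $\sup_{k}|t_k(a)/b_k|<1$ and $\sum_k t_k^2(a)/\sigma_k^2$ is small enough (here $b_k=\sigma_k=1$), then $\inf_{\alpha\in\ell_0}P(\sum_k\alpha_kt_k(X)\ge\sum_k\alpha_kt_k(a))>0$. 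Given such a lemma, I would split as in Theorem \ref{nasc-indep}: choose $d$ large so that $\sum_{k\ge d+1}t_k^2(a)<1$, apply the bounded-variable Paley–Zygmund lemma to the tail to get a positive infimum over the coordinates $k\ge d+1$, and handle the finite-dimensional head via a positivity-of-depth statement. The subtlety, flagged in the text, is that (A-I) as literally stated need not apply, because a point $a$ with some $|t_k(a)|=1$ lies on the boundary of the support of the projected (discrete) measure; so the head estimate must be argued directly, using that $\sup_k|t_k(a)|\le1$ and that each Rademacher coordinate puts mass $\tfrac12$ at $\pm1$, guaranteeing $P(\Pi_d(X)\text{ in the right half-space of }\Pi_d(a))$ stays bounded below. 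Finally, multiply the head and tail infima using the independence factorization exactly as in Theorem \ref{nasc-indep}.

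\medskip
\noindent\textbf{Failure of consistency.} For the last clause, observe that whenever $\sum_{k\ge1}t_k^2(a)<\infty$ we have $t_k(a)\to0$, so with $\sigma_k=1$ condition (\ref{thm3ReqCond}) holds. Moreover the symmetric Rademacher law satisfies $\liminf_kP(t_k(X)<c_k)\ge\tfrac12>0$ for any $c_k\to0$, so (\ref{thm3-assump}) holds. Theorem \ref{emp-depth} then forces $HD_{\mathcal{T}}(a,\mu_n)=0$ a.s.\ for every $n$, while the positive-depth conclusion just established gives $HD_{\mathcal{T}}(a,\mu)>0$; hence consistency fails, precisely as in Corollary \ref{consist}.

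\medskip
I expect the main obstacle to be the boundary case in the head estimate, that is, establishing positivity of the finite-dimensional depth $HD_{\mathbb{R}^d}(\Pi_d(a),\mu^{\Pi_d})$ when some coordinate satisfies $|t_k(a)|=1$, since the projected measure is supported on a finite lattice and $\Pi_d(a)$ may sit on its convex-hull boundary. The fact that the Rademacher law charges the extreme values $\pm1$ with positive probability is exactly what rescues positivity there, and making that rigorous in $d$ dimensions is the delicate step.
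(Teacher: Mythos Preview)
Your overall architecture is correct and matches the paper's: zero depth via Theorem \ref{necessity-zero-depth} and a single-coordinate functional, positive depth via the head/tail split from the proof of Theorem \ref{nasc-indep}, and failure of consistency via Theorem \ref{emp-depth}. However, you have misidentified the content of Lemma 7. Lemma 7 is \emph{not} a bounded-variable analogue of the Paley--Zygmund Lemma \ref{using-PZ}; rather, it consists of exactly the two ingredients you describe separately: the single-functional argument giving zero depth when $\sup_k|t_k(a)|>1$, and the finite-dimensional head estimate $HD_{\mathbb{R}^d}(\Pi_d(a),\mu^{\Pi_d})\ge 2^{-d}$ when $\sup_k|t_k(a)|\le1$, proved by an easy induction on $d$ using $P(\alpha_k t_k(X)\ge\alpha_k t_k(a))\ge\tfrac12$ for each coordinate. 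So the ``direct head argument'' you correctly flag as the delicate step \emph{is} Lemma 7, and it is not delicate at all once you observe that the Rademacher mass at $\pm1$ makes each one-dimensional factor at least $1/2$.

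Conversely, the tail needs no new lemma: Rademachers satisfy (A-II) with $c=1$, so Lemma \ref{using-PZ} applies verbatim to the coordinates $k\ge d+1$ once $\sum_{k\ge d+1}t_k^2(a)<1$; no additional $\sup$-condition is required there. With this relabeling your proof is exactly the paper's.
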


The proof of the proposition requires the following lemma.
\bigskip

\begin{lem} Let $\{t_k(X):k \ge1\}$ be independent Rademacher random variables. If  $a\in B$ and  $\sup_{k \ge 1}|t_k(a)|>1$, then 
\begin{align}\label{3.12}
HD_{\mathcal{T}}(a,\mu) =0 . 
\end{align}
Furthermore, if  $\sup_{k \ge 1}|t_k(a)|\le 1$, then for every $d\ge 1$ 
\begin{align}\label{3.13}
HD_{\mathbb{R}^d}(\Pi_d(a),\mu^{\Pi_d})\ge 2^{-d},  
\end{align}
where $\Pi_d(a)=(t_1(a),\ldots,t_d(a)), a \in B$, and $\mu^{\Pi_d}$ is the image of $\mu$ on $\mathbb{R}^d$ via the map $\Pi_d(\cdot): B \rightarrow \mathbb{R}^d.$
\end{lem}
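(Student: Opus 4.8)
The plan is to handle the two assertions separately, each by an elementary construction that uses only the fact that every $t_k(X)$ takes the two values $\pm 1$ with probability $\tfrac12$.

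For the first claim, suppose $\sup_{k\ge1}|t_k(a)|>1$. Then there is an index $k_0$ with $|t_{k_0}(a)|>1$, and I would test the depth against the single functional $t_\alpha$ obtained by taking $\alpha\in\ell_0$ with $k_0$-th coordinate $\mathrm{sign}(t_{k_0}(a))$ and all other coordinates zero. For this choice $t_\alpha(X)=\mathrm{sign}(t_{k_0}(a))\,t_{k_0}(X)\in\{-1,+1\}$, while $t_\alpha(a)=|t_{k_0}(a)|>1$; hence $t_\alpha(X)\le 1<t_\alpha(a)$ surely and $P(t_\alpha(X)\ge t_\alpha(a))=0$. Since by (\ref{hsdell0}) the depth $HD_{\mathcal{T}}(a,\mu)$ is the infimum over all $\alpha\in\ell_0$ of $P(t_\alpha(X)\ge t_\alpha(a))$, this single $\alpha$ already forces (\ref{3.12}).

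For the second claim, assume $\sup_{k\ge1}|t_k(a)|\le1$, so in particular $|t_k(a)|\le1$ for $1\le k\le d$. Writing the finite-dimensional depth as $HD_{\mathbb{R}^d}(\Pi_d(a),\mu^{\Pi_d})=\inf_{\beta\in\mathbb{R}^d}P(\sum_{k=1}^d\beta_k t_k(X)\ge\sum_{k=1}^d\beta_k t_k(a))$, as in the Remark following the Assumptions, I would fix an arbitrary $\beta\in\mathbb{R}^d$ and single out the vertex $\varepsilon^\ast=(\varepsilon_1^\ast,\dots,\varepsilon_d^\ast)\in\{-1,1\}^d$ given by $\varepsilon_k^\ast=\mathrm{sign}(\beta_k)$, taking $\varepsilon_k^\ast=1$ when $\beta_k=0$. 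The key observation is that this vertex maximizes the linear form over the cube: $\sum_{k=1}^d\beta_k\varepsilon_k^\ast=\sum_{k=1}^d|\beta_k|$, and since $|t_k(a)|\le1$ we have $\sum_{k=1}^d\beta_k t_k(a)\le\sum_{k=1}^d|\beta_k|=\sum_{k=1}^d\beta_k\varepsilon_k^\ast$. Because $(t_1(X),\dots,t_d(X))$ is uniform on the $2^d$ vertices of $\{-1,1\}^d$ by independence, the event $\{(t_1(X),\dots,t_d(X))=\varepsilon^\ast\}$ has probability $2^{-d}$, and on this event $\sum_{k=1}^d\beta_k t_k(X)=\sum_{k=1}^d|\beta_k|\ge\sum_{k=1}^d\beta_k t_k(a)$. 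Hence $P(\sum_{k=1}^d\beta_k t_k(X)\ge\sum_{k=1}^d\beta_k t_k(a))\ge 2^{-d}$ for every $\beta$, and taking the infimum over $\beta\in\mathbb{R}^d$ yields (\ref{3.13}).

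The argument has no serious obstacle: both parts reduce to recognizing that the extremal vertex of the discrete support dominates $\langle\beta,\Pi_d(a)\rangle$ precisely when $\|\Pi_d(a)\|_\infty\le1$, and that this vertex carries mass $2^{-d}$. The only points requiring care are the bookkeeping for vanishing coefficients $\beta_k$ in defining $\varepsilon^\ast$, and observing that the lower bound $2^{-d}$ is uniform in $\beta$, which is exactly what is needed before passing to the infimum.
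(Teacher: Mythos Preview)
Your proof is correct. The first part is essentially identical to the paper's argument: both pick the single coordinate $k_0$ with $|t_{k_0}(a)|>1$, take the functional $\mathrm{sign}(t_{k_0}(a))\,t_{k_0}(\cdot)$, and observe that this forces the half-space probability to vanish.

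For the second part you take a genuinely different route. The paper proceeds by induction on $d$: it splits $\sum_{k=1}^d\beta_k t_k(X)\ge\sum_{k=1}^d\beta_k t_k(a)$ into the first $d-1$ coordinates and the $d$-th, uses independence to factor the probability, and shows that $\inf_{s\in\mathbb{R}}P(s\,t_k(X)\ge s\,t_k(a))\ge\tfrac12$ whenever $|t_k(a)|\le1$, yielding the bound $2^{-d}$ after iterating. You instead give a one-shot argument: for each $\beta$ you exhibit the single vertex $\varepsilon^\ast\in\{-1,1\}^d$ that maximizes the linear form $\langle\beta,\cdot\rangle$ over the cube, note that $\langle\beta,\Pi_d(a)\rangle\le\sum_k|\beta_k|=\langle\beta,\varepsilon^\ast\rangle$ because $\|\Pi_d(a)\|_\infty\le1$, and use that this vertex carries mass exactly $2^{-d}$. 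Your argument is shorter and avoids the inductive bookkeeping; the paper's approach, on the other hand, makes transparent why the bound is $2^{-d}$ (each coordinate independently contributes a factor of $\tfrac12$) and suggests more readily how one might adapt the lemma to other bounded symmetric coordinates, as the paper in fact does just before stating Proposition~3.
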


\begin{proof} If $|t_{k_0}(a)|>1,$ and  $\widehat\alpha= \{\delta(k,k_0){\rm{sign}} (t_k(a)): k \ge 1\}$ where $\delta(k,k_0)=1$ when $k=k_0$ and zero otherwise, then
\begin{align}
HD_{\mathcal{T}}(a,\mu)= \inf_{\alpha \in \ell_0} P(\sum_{k\ge1}\alpha_k t_k(X)\ge \sum_{k \ge1}\alpha_kt_k(a))\le \inf_{\alpha= \widehat\alpha}P(\alpha(X) \ge \alpha(a))
\end{align}
\begin{align}
=P({\rm {sign}}(t_{k_0}(a))t_{k_0}(X) \ge |t_{k_0}(a)|)=0.
\end{align}
Hence, (\ref{3.12}) holds.

To verify (\ref{3.13}) we observe that $ \mu^{\Pi_d} = \mathcal{L}( P^{\Pi_d(X)})$, and hence
\begin{align*}
HD_{\mathbb{R}^d}(\Pi_d(a),\mu^{\Pi_d})=  \inf_{ \alpha \in \ell_0} P(\sum_{k=1}^d \alpha_kt_k(X) \ge \sum_{k=1}^d \alpha_kt_k(a)) 
\end{align*}
\begin{align*}
\ge \inf_{\alpha \in \ell_0} P(\sum_{k =1}^{d-1}\alpha_k t_k(X) \ge \sum_{k=1}^{d-1}\alpha_k t_k(a), \alpha_dt_d(X)\ge \alpha_dt_d(a)) 
\end{align*}
\begin{align}\label{3.14}
\ge \inf_{\alpha \in \ell_0} P(\sum_{k =1}^{d-1}\alpha_k t_k(X) \ge \sum_{k=1}^{d-1}\alpha_k t_k(a))P( \alpha_dt_d(X)\ge \alpha_dt_d(a)),  
\end{align}
where the last inequality holds by the independence of the $\{t_k(X): k \ge 1\}$. Furthermore, for all $k\ge1,$ 
\begin{align}\label{3.15}
\inf_{s \in \mathbb{R}}P(s t_k(X) \ge s t_k(a)) ={\rm {min}}[A_1,A_2], 
\end{align}
where
\begin{align*}
A_1=\inf_{s \ge 0}P(s t_k(X) \ge s t_k(a))=P(t_k(X) \ge t_k(a)) 
\end{align*}
and 
\begin{align*}
A_2=\inf_{s <0}P(s t_k(X) \ge s t_k(a))=P(t_k(X) \le t_k(a)).
\end{align*}
Hence $|t_k(a)| \le 1$ implies $A_i \ge 1/2$ for $i=1,2$, and (\ref{3.15}) then implies (\ref{3.13}) for $d=1$. Furthermore, (\ref{3.15}) applied to (\ref{3.14}) allows us to induct on 
$d$ proving (\ref{3.13}) for all $d \ge 1$.
\end{proof}

\n{\bf Proof of Proposition 3.} If $\sum_{k \ge 1} t_k^2(a) = \infty$, then (\ref{3.10}) holds by Theorem \ref{necessity-zero-depth}, and  when $\sup_{k \ge 1}|t_k(a)| > 1,$ we have (\ref{3.10}) by Lemma 7. Hence it remains to show that $\sum_{k \ge 1} t_k^2(a) < \infty$  and  $\sup_{k \ge 1}|t_k(a)|\le 1$ imply (\ref{3.11}).
This follows since (\ref{3.13}) holds for all $a\in B$ satisfying $\sup_{k \ge 1}|t_k(a)|\le 1,$ and hence, although this is not equivalent (\ref{A-I(i)}) in (A-I), the proof of Theorem \ref{nasc-indep} shows that if (\ref{A-I(i)}) is replaced by (\ref{3.13}) in (A-I), then (\ref{3.11}) holds provided $\sum_{k \ge 1} t_k^2(a) < \infty$  and  $\sup_{k \ge 1}|t_k(a)|\le 1$ for $a \in B$. Finally, Remark 4 following Theorem 3 implies that for all such $a \in B$ consistency fails.
\bigskip

Although Proposition 3 identifies those $a \in B$ with positive half-space depth for the Rademacher variables, it is unclear what its value might be on such points. Below we obtain some estimates on a lower bound for this depth using two different methods. The first method modifies the estimates in Lemma \ref{using-PZ} appropriately, and the second applies the delicate results in \cite{monty-rad-sums}. However, neither approach yields estimates that apply to all $a \in B$ where the half space depth is positive, and hence they do identify that collection of points as in Proposition 3.

\begin{prop} Let $\{t_k(X): k \ge 1\}$ be independent  Rademacher random variables, and $a \in B$. If $r\in\mathbb{N}$ and $\delta$ are such that 
 $ \sum_{k >r}t_k^2(a)\le \dfrac14$ and $\delta\sqrt{r}\le \dfrac14$, then  $\sup_{k \ge 1}|t_k(a)|\le\delta$ implies
 \begin{align}
HD_{\mathcal{T}}(a,\mu)=\inf_{\alpha \in \ell_0}P( \sum_{k \ge 1}\alpha_k t_k(X) \ge \sum_{k \ge 1}\alpha_k t_k(a))\ge \dfrac3{32}.
\end{align}
\end{prop}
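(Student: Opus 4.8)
The plan is to run the symmetrization-plus-Paley--Zygmund argument of Lemma \ref{using-PZ}, but with the threshold $t_\alpha(a)$ estimated by splitting the index set at $r$. Using the parametrization (\ref{hsdell0}) I would write $HD_{\mathcal T}(a,\mu)=\inf_{\alpha\in\ell_0}P(t_\alpha(X)\ge t_\alpha(a))$, where $t_\alpha(X)=\sum_k\alpha_k t_k(X)$ is a weighted Rademacher sum, and first reduce to the case $t_\alpha(a)>0$, since otherwise $P(t_\alpha(X)\ge t_\alpha(a))\ge\frac12$ by symmetry. Because the $t_k(X)$ are independent, symmetric and bounded by $1$, assumption (A-II) holds with $c=1$, so Lemma \ref{lin-comb-4th-mom} supplies the fourth-moment bound $\E(t_\alpha(X)^4)\le 3(\E(t_\alpha(X)^2))^2=3\|\alpha\|_2^4$ needed to drive Paley--Zygmund.

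The new ingredient is the threshold estimate, and here the two constraints are used on the two blocks separately. Writing $\alpha_{\le r}$ and $\alpha_{>r}$ for the head and tail of $\alpha$, the uniform control $\sup_k|t_k(a)|\le\delta$ with $\delta\sqrt r\le\frac14$ and Cauchy--Schwarz give $|\sum_{k\le r}\alpha_k t_k(a)|\le \delta\sqrt r\,\|\alpha_{\le r}\|_2\le\frac14\|\alpha_{\le r}\|_2$, while $\sum_{k>r}t_k^2(a)\le\frac14$ gives $|\sum_{k>r}\alpha_k t_k(a)|\le\frac12\|\alpha_{>r}\|_2$. Exactly as in (\ref{2.25})--(\ref{2.29}), symmetry yields $P(t_\alpha(X)\ge t_\alpha(a))\ge\frac12 P(|t_\alpha(X)|\ge t_\alpha(a))$, and then Paley--Zygmund with the fourth-moment bound above turns a threshold of the form $\lambda\|\alpha\|_2$ into the lower bound $\tfrac12(1-\lambda^2)^2/3$. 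Note that $\lambda=\tfrac12$ returns precisely $\frac16(2\cdot\frac12-\frac14)^2=\frac3{32}$, i.e. Lemma \ref{using-PZ} run with $c=1$ and $\delta=\frac12$; so the whole game is to arrange an effective ratio $\lambda=\tfrac12$.

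The step I expect to be the main obstacle is extracting this clean constant $\frac3{32}$ without losing it to the head block. A single Cauchy--Schwarz estimate on the entire sum only gives $t_\alpha(a)\le\|\tau(a)\|_2\|\alpha\|_2$ with $\|\tau(a)\|_2\le(\frac1{16}+\frac14)^{1/2}=\frac{\sqrt5}4>\frac12$, so one Paley--Zygmund application at that level falls short of $\frac3{32}$; a naive product bound across the two independent blocks is also too lossy, since it multiplies two sub-one factors. To recover $\frac3{32}$ I would instead isolate the dominant tail block, on which $\sum_{k>r}t_k^2(a)\le\frac14=(1-\frac12)^2$ forces its governing ratio to be $\le\frac12$ and hence reproduces the Lemma \ref{using-PZ} value $\frac3{32}$, and then exploit the independence of the blocks as in the proof of Theorem \ref{nasc-indep} to append the head. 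The delicate part is showing that the bounded head block—whose total mass $\|\tau(a)_{\le r}\|_2\le\frac14$ is small precisely because $\delta\sqrt r\le\frac14$—cannot erode the tail's constant; here the smallness of $\sup_k|t_k(a)|$ together with the symmetry (and hence the freedom to push $|t_\alpha(X)|$ rather than $t_\alpha(X)$ across the head threshold) are what must be leveraged to keep the final bound at $\frac3{32}$.
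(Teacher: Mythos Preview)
Your first two paragraphs are exactly the paper's proof. The paper splits $t_\alpha(a)$ at $r$, bounds the head by $\delta\sqrt r\,\|\alpha\|_2$ and the tail by $\bigl(\sum_{k>r}t_k^2(a)\bigr)^{1/2}\|\alpha\|_2$, asserts $t_\alpha(a)\le\frac12\|\alpha\|_2$, symmetrizes to $\frac12\,P\bigl(|t_\alpha(X)|^2\ge\frac14\|\alpha\|_2^2\bigr)$, and applies Paley--Zygmund together with Lemma~\ref{lin-comb-4th-mom} (Rademachers give $c=1$) to obtain $\frac3{32}$. There is no second stage and no block-independence trick; it is precisely the one-shot argument you sketched and then abandoned.

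The arithmetic discrepancy you noticed is real: with the stated hypotheses the two pieces add to $\frac14\|\alpha\|_2+\frac12\|\alpha\|_2=\frac34\|\alpha\|_2$, not $\frac12\|\alpha\|_2$, so the line ``Hence $\sum_k\alpha_kt_k(a)\le\frac12\|\alpha\|_2$'' in the paper is a slip. The method is exactly as you described; only the constants do not match as written (replacing the tail hypothesis by $\sum_{k>r}t_k^2(a)\le\frac1{16}$, for instance, restores the bound $\frac12\|\alpha\|_2$ and hence $\frac3{32}$).

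Your proposed repair, however, has a genuine gap. Splitting via independence as in the proof of Theorem~\ref{nasc-indep} gives
\[
P\bigl(t_\alpha(X)\ge t_\alpha(a)\bigr)\ \ge\ P\Bigl(\textstyle\sum_{k\le r}\alpha_kt_k(X)\ge\sum_{k\le r}\alpha_kt_k(a)\Bigr)\cdot P\Bigl(\textstyle\sum_{k>r}\alpha_kt_k(X)\ge\sum_{k>r}\alpha_kt_k(a)\Bigr),
\]
and the tail factor is indeed $\ge\frac3{32}$ by Lemma~\ref{using-PZ} with $c=1$ and $\delta=\frac12$. But the head factor is not close to $1$: by symmetry it is at most $\frac12$ whenever the head threshold is positive, and the best uniform lower bound you can extract for it (Paley--Zygmund with ratio $\frac14$) is well below $1$. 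Multiplying therefore drops you strictly below $\frac3{32}$. ``Smallness'' of $\|\tau(a)_{\le r}\|_2$ does not push the head probability toward $1$; symmetry pins it near $\frac12$. So this route cannot recover the stated constant, and you should stay with the direct one-shot Paley--Zygmund argument (accepting that the constants in the statement/proof as printed are slightly off).
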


\begin{proof} If $r,\delta>0$ and $\alpha \in \ell_0$ are as indicated in the proposition, then 
\begin{align*}&\sum_{k \ge 1}\alpha_{k}t_k(a) = \sum_{k=1}^{r}\alpha_{k}t_k(a)+\sum_{ k\ge r+1}\alpha_{k}t_k(a)\le \delta\sqrt{r}\|\alpha\|_{2}+\|\alpha\|_{2}\bigl(\sum_{k>r}t_k^2(a)\bigr)^{1/2}.
\end{align*}
Hence, $\sum_{k \ge 1}\alpha_{k}t_k(a) \le \dfrac12\|\alpha\|_{2}$ , and
\begin{align}
&P\bigl(\sum_{k\ge 1}\alpha_kt_k(X) \ge\sum_{k \ge 1}\alpha_{k}t_k(a)\bigr)\ge P\bigl(\sum_{k \ge 1}\alpha_kt_k(X)\ge\dfrac12 \|\alpha\|_{2}\bigr)
\end{align}
\begin{align}\label{prePZ}&=\dfrac12 P\bigl(|\sum_{k\ge 1}\alpha_{k}t_k(X)|\ge\dfrac12 \|\alpha\|_{2}\bigr)
=\dfrac12P\bigl(|\sum_{k \ge 1}\alpha_kt_k(X)|^{2}\ge\dfrac14 \|\alpha\|_{2}^{2}\bigr).
\end{align}
Thus the Paley-Zygmund inequality and Lemma 1 (with the $t_k(X)$ Rademacher random variables) applied to the last term in (\ref{prePZ}), imply
\begin{align}
P\bigl(\sum_{k\ge 1}\alpha_kt_k(X) \ge\sum_{k\ge 1} \alpha_kt_k(a)) \ge  \dfrac{9}{32} 
\dfrac{(E[ (\sum_{k\ge 1}\alpha_{k}t_k(X))^{2}])^2}{E[(\sum_{k \ge 1}\alpha_k t_k(X))^{4}]}\ge\dfrac{3}{32},
\end{align}
which proves the proposition.
\end{proof}

In order to use the results in \cite{monty-rad-sums} we introduce some norms from the theory of interpolation of Banach spaces. Of course,  (\ref{3.18}) below plays a role analogous to the $\ell_{\infty}$ and $\ell_2$ assumptions in Proposition 3. The notation is from  \cite{monty-rad-sums}, which defines for $x \in \ell_2$ and $t>0$
\begin{align}
K_{1,2}(x,t)\equiv K(x,t:\ell_1,\ell_2)= \inf\{||x'||_1 +t||x''||_2: x',x'' \in \ell_2, x'+x''=x\},
\end{align}
and
\begin{align}
J_{\infty,2}(x,t) \equiv J(x,t:\ell_{\infty},\ell_2)= \max \{||x||_{\infty}, t||x||_2\}.
\end{align}
Then, for $t>0, x \in \ell_2$ we have from Lemma 1 in  \cite{monty-rad-sums} that
\begin{align}\label{3.16}
K_{1,2}(x,t)= \sup\{\sum_{k\ge 1}x_ky_k: y \in \ell_2, J_{\infty,2}(y,t^{-1}) \le 1\}, 
\end{align}
and Theorem 1 of  \cite{monty-rad-sums} implies there is a constant $c>0$ such that for all $x \in \ell_2$ and $t>0$
\begin{align}\label{3.17}
P(\sum_{k \ge 1}x_k\epsilon_k \ge c^{-1}K_{1,2}(x,t)) \ge c^{-1}e^{-ct^2}, 
\end{align}
where $\{ \epsilon_k:k \ge 1\}$ are independent Rademacher random variables.
\bigskip

\begin{prop} Let $\{t_k(X): k \ge 1\}$ be independent  Rademacher random variables, and $a \in B$ is such that $\sum_{k \ge 1}t_k^2(a)< \infty$ . If $c>0$ is as in (\ref{3.17}) and for some $t_0>0$
\begin{align}\label{3.18}
\max\{c||\{t_k(a):k \ge 1\}||_{\infty}, t_0 ^{-1}c||\{t_k(a):k \ge 1\}||_2\} \le 1, 
\end{align}
then,
\begin{align}\label{3.19}
HD_{\mathcal{T}}(a,\mu) \ge c^{-1}e^{-ct_0^2}. 
\end{align}
\end{prop}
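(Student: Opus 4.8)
The plan is to bound, uniformly in $\alpha \in \ell_0$, the single-functional probability $P(t_\alpha(X) \ge t_\alpha(a))$ from below by $c^{-1}e^{-ct_0^2}$, and then to pass to the infimum using the parameterization (\ref{hsdell0}), which gives $HD_{\mathcal{T}}(a,\mu) = \inf_{\alpha \in \ell_0} P(\sum_{k \ge 1}\alpha_k t_k(X) \ge \sum_{k \ge 1}\alpha_k t_k(a))$. Writing $\epsilon_k = t_k(X)$, the sum $\sum_{k \ge 1}\alpha_k t_k(X)$ is exactly the Rademacher sum $\sum_{k \ge 1}\alpha_k \epsilon_k$ appearing in the Montgomery-Smith estimate (\ref{3.17}) with $x = \alpha$. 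The idea is to run (\ref{3.17}) at the threshold $c^{-1}K_{1,2}(\alpha, t_0)$ and to show that this threshold always dominates $t_\alpha(a) = \sum_{k \ge 1}\alpha_k t_k(a)$, so that the event produced by (\ref{3.17}) sits inside the event whose probability defines the depth.

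The crux is the deterministic inequality
\[
\sum_{k \ge 1}\alpha_k t_k(a) \le c^{-1}K_{1,2}(\alpha, t_0), \qquad \alpha \in \ell_0.
\]
To prove it I would take $y = \{c\,t_k(a): k \ge 1\}$, which lies in $\ell_2$ since $\sum_{k \ge 1}t_k^2(a) < \infty$ by hypothesis. The hypothesis (\ref{3.18}) states precisely that $J_{\infty,2}(y, t_0^{-1}) = \max\{\|y\|_\infty, t_0^{-1}\|y\|_2\} \le 1$, so $y$ is an admissible competitor in the dual description (\ref{3.16}) of $K_{1,2}(\alpha, t_0)$. Evaluating (\ref{3.16}) at this particular $y$ yields $c\sum_{k \ge 1}\alpha_k t_k(a) = \sum_{k \ge 1}\alpha_k y_k \le K_{1,2}(\alpha, t_0)$, which is the claimed inequality. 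Since $K_{1,2}(\alpha, t_0) \ge 0$, this holds for every $\alpha$ irrespective of the sign of $t_\alpha(a)$, so no case distinction is needed.

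With the deterministic inequality established, I would apply (\ref{3.17}) with $x = \alpha$ and $t = t_0$. Because
\[
\{\textstyle\sum_{k \ge 1}\alpha_k \epsilon_k \ge c^{-1}K_{1,2}(\alpha, t_0)\} \subseteq \{\textstyle\sum_{k \ge 1}\alpha_k \epsilon_k \ge \sum_{k \ge 1}\alpha_k t_k(a)\},
\]
monotonicity of probability and (\ref{3.17}) give
\[
P\Big(\sum_{k \ge 1}\alpha_k t_k(X) \ge \sum_{k \ge 1}\alpha_k t_k(a)\Big) \ge P\Big(\sum_{k \ge 1}\alpha_k \epsilon_k \ge c^{-1}K_{1,2}(\alpha, t_0)\Big) \ge c^{-1}e^{-ct_0^2}.
\]
Taking the infimum over $\alpha \in \ell_0$ and using (\ref{hsdell0}) then produces (\ref{3.19}).

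The argument is short, and the only real content is the matching in the second paragraph: recognizing that the correct test sequence for the duality (\ref{3.16}) is $y = \{c\,t_k(a)\}$ and that the hypothesis (\ref{3.18}) is exactly the admissibility constraint $J_{\infty,2}(y, t_0^{-1}) \le 1$ for that sequence. The one point to be careful about is the bookkeeping of the constant $c$: the same $c$ that appears in (\ref{3.17}) must be the one used to scale $y$, so that the factor $c^{-1}$ in the threshold and the factor $c$ inside $y$ cancel correctly; once that scaling is pinned down, the remainder is pure monotonicity and carries no analytic difficulty.
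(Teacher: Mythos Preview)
Your proposal is correct and follows essentially the same route as the paper: use the duality (\ref{3.16}) with the test sequence $y=\{c\,t_k(a)\}$ to obtain $c\sum_k\alpha_k t_k(a)\le K_{1,2}(\alpha,t_0)$, then feed this into (\ref{3.17}) and take the infimum. The only cosmetic difference is that the paper first invokes symmetry to restrict the infimum to $\alpha$ with $t_\alpha(a)>0$, whereas you observe that $K_{1,2}(\alpha,t_0)\ge 0$ makes the deterministic inequality automatic when $t_\alpha(a)\le 0$, so no such reduction is needed.
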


\begin{proof} If (\ref{3.18}) holds, then (\ref{3.16}) implies for all $\alpha \in \ell_2$ that
\begin{align}\label{3.20}
K_{1,2}(\alpha,t_0) \ge c\sum_{k\ge 1} \alpha_kt_k(a). 
\end{align}
Since symmetry implies
\begin{align}\label{3.21}
HD_{\mathcal{T}}(a,\mu) = \inf_{\alpha \in \ell_0, t_{\alpha}(a)>0}P(\sum_{k\ge 1}\alpha_kt_k(X) \ge \sum_{k\ge1}\alpha_kt_k(a)), 
\end{align}
(\ref{3.17}),(\ref{3.20}), and (\ref{3.21}) combine to imply
\begin{align}
HD_{\mathcal{T}}(a,\mu) \ge \inf_{\alpha \in \ell_0, t_{\alpha}(a)>0}P(\sum_{k\ge 1}\alpha_k t_k(X) \ge c^{-1}K_{1,2}(\alpha,t_0))\ge c^{-1}e^{-ct_0^2}.
\end{align}
Therefore, (\ref{3.19}) holds and the proposition is proved.
\end{proof}
\bigskip

\subsection{Empirical depths for the Mosler-Polyakova version of Liu's simplicial depth.}\label{sec:MP} The depths considered to this point have been based on linear combinations of the one dimensional functionals $\{t_k:k \ge 1\}$, but they may as well take values in $\mathbb{R}^d$. The recent manuscript by \cite{mosler-poly-funct-depth} uses this approach to define depths on $B$. That is, let $D_d(\cdot,\cdot)$ be a depth on $\mathbb{R}^d$, and assume $\Theta$ is a collection of Borel measurable maps from $B$ to $\mathbb{R}^d$. Then, for $\mu$ a Borel probability measure on $B$ and $a \in B$, define
\begin{align}\label{3.31}
D_{\Theta}(a,\mu)= \inf_{\theta \in \Theta} D_d(\theta(a), \mu^{\theta}), 
\end{align}
where $\mu^{\theta}(A)= \mu(\theta^{-1}(A)),$ A a Borel subset of $\mathbb{R}^d$. 

In connection with their application to data clouds in $B$, the paper \cite{mosler-poly-funct-depth} points out that there may be problems with this sort of depth when $\Theta$ is too large. The next proposition provides an explicit  example of this problem in connection with the empirical estimation of $D_{\Theta}(a,\mu)$ when $B=\mathbb{R}^{\infty}$ and $D_d(\cdot,\cdot)$ is simplicial depth as in \cite{liu}. That is, for $ x \in \mathbb{R}^d$ and $Q$ a Borel probability measure on $\mathbb{R}^d$
\begin{align}\label{3.32}
D_d(x,Q) = P( x\in  co(Y_1,\ldots,Y_{(d+1)})), 
\end{align}
where $Y_1,\ldots,Y_{(d+1)}$ are i.i.d. with law $Q$ and $co(Y_1,\ldots,Y_{(d+1)})$ denotes the open convex hull of $Y_1,\ldots,Y_{(d+1)}$.
In particular, it is interesting to observe via (\ref{3.37})-(\ref{3.40}) below that empirical estimation is not dependable when enough independence is inherent in the data, even if $\Theta$ is only countable. 

Some further notation is as follows. Let $X, X_1,X_2,\ldots$ be i.i.d. $\mathbb{R}^{\infty}$ valued random vectors with $X=(\eta_1,\eta_2,\ldots)$ where $\eta,\eta_1,\eta_2,\ldots$ are i.i.d. real valued random variables, and $X_j=(\eta_{1,j},\eta_{2,j},\ldots), j \ge 1$. For $x=(x_1,x_2,\ldots) \in \mathbb{R}^{\infty}$, let
\begin{align}\label{3.33}
\theta_k(x)=(x_{i_k +1},\ldots,x_{i_{k+1}}), 
\end{align}
where $i_k=(k-1)d, k=1,2,\ldots, $ and henceforth assume $\Theta=\{\theta_k: k \ge 1\}$. Then, for $a=(a_1,a_2,\ldots,a_d,a_1,a_2,\ldots,a_d, \ldots) \in \mathbb{R}^{\infty}$ we have $\theta_k(a)=(a_1,\ldots,a_d)$ for all $k \ge 1.$ Furthermore, for all $k \ge 1, u \ge 0$, the probability
\begin{align}\label{3.34}
P( \theta_k(a) \in {\rm{co}}( \theta_k(X_{u+1}), \ldots, \theta_k(X_{u+(d+1)}))) 
\end{align}
is independent of $k$ and $u$, and denoted by $ \lambda(a)$.  
For $a \in \mathbb{R}^{\infty}$, 
we define
\begin{align}\label{3.35}
Z_{n,k}(a)=  \sum_{J_{n,d}} I(\theta_k(a) \in {\rm{co}}(\theta_k(X_{j_1}),\ldots,\theta_k(X_{j_{d+1}}))), 
\end{align}
where 
$$
J_{n,d}= \{(j_1,\ldots,j_{d+1}): 1 \le j_1<\cdots<j_{d+1} \le n\}.
$$
Then, for $a \in  \mathbb{R}^{\infty}$, and $D_d(x,Q)$ the simplicial depth of (\ref{3.32}) with $Q=\mathcal{L}(\eta_1,\ldots,\eta_d),$ we follow \cite{liu-simplicial-88} and \cite{liu}, and consider the sample analogue of (\ref{3.31}) to be
\begin{align}\label{3.36}
D_{\Theta,n}(a)= \inf_{k \ge 1} \frac{Z_{n,k}(a)}{N_{n,d}},  
\end{align}
where 
$N_{n,d}=\frac{n!}{(d+1)!(n-d-1)!}$. This is slightly different than what one would have if the empirical simplicial depth were defined in terms of the empirical probability  measure $P_n =\frac{1}{n}\sum_{k=1}^n \delta_{X_k}$, as then  
$N_{n,d}$ would be replaced by $\frac{n^{(d+1)}}{(d+1)!}$. However, since these quantities differ by a non-random quantity which is $O(\frac{1}{n})$, we lose no generality in using (\ref{3.36}).

\begin{prop} Let $X,X_1,\cdots$ be  i.i.d. $\mathbb{R}^{\infty}$-valued Borel measurable random vectors on the probability space $(\Omega,\mathcal{F},P)$ as indicated above, and assume $\eta$ has a probability density on $\mathbb{R}$. Also, assume
$D_{\Theta}(a,\mu)$ is defined using simplicial depth on $\mathbb{R}^d$ as above, and $\Theta=\{\theta_k: k \ge 1\}$. Then, $\{\theta_k(X): k \ge 1\}$ are i.i.d. $\mathbb{R}^d$-valued random vectors with absolutely continuous distribution $Q$ on $\mathbb{R}^d$, and for $a=(a_1,a_2,\ldots,a_d,a_1,a_2,\ldots,a_d, \ldots) \in \mathbb{R}^{\infty}$ we have
\begin{align}\label{3.37}
D_{\Theta}(a,\mu)= \inf_{k \ge 1}P(\theta_k(a) \in {\rm{co}}(\theta_k(X_1),\ldots,\theta_k(X_{d+1})))=\lambda(a).
\end{align}
Furthermore, for each $k \ge 1$ we have with $P$--probability one that
\begin{align}\label{3.38}
\lim_{n \rightarrow \infty}|\frac{Z_{n,k}(a)}{N_{n,d}} - \lambda(a)|=0, 
\end{align}
but 
with probability one 
\begin{align}\label{3.39}
|D_{\Theta,n}(a)-D_{\Theta}(a,\mu)|=|\inf_{k\ge 1}\frac{Z_{n,k}(a)}{N_{n,d}}- \lambda(a)| = \lambda(a). 
\end{align}
Hence, for  $\lambda(a)>0$, the empirical simplicial depth fails to approximate the true simplicial depth as $n \rightarrow \infty$ in this model.
\end{prop}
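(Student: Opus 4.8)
The plan is to dispatch the displayed assertions in turn, with essentially all of the content living in (\ref{3.39}). The opening claim that $\{\theta_k(X):k\ge 1\}$ are i.i.d.\ with absolutely continuous law $Q$ is just bookkeeping: $\theta_k(X)=(\eta_{(k-1)d+1},\ldots,\eta_{kd})$ is the $k$-th block of $d$ coordinates of $X$, distinct blocks are functions of disjoint subfamilies of the i.i.d.\ array $\{\eta_{i,j}\}$, hence independent and identically distributed, and each block has law equal to the $d$-fold product of the law of $\eta$, which has a density on $\mathbb{R}^d$. For (\ref{3.37}) I would unwind the definition (\ref{3.31}): since $a$ is periodic with period $d$ we have $\theta_k(a)=(a_1,\ldots,a_d)$ and $\mu^{\theta_k}=Q$ for every $k$, so $D_d(\theta_k(a),\mu^{\theta_k})=D_d((a_1,\ldots,a_d),Q)=\lambda(a)$ does not depend on $k$ and the infimum over $\Theta$ equals $\lambda(a)$. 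For (\ref{3.38}), fix $k$ and note that $Z_{n,k}(a)/N_{n,d}$ is precisely the U-statistic of order $d+1$ with bounded symmetric kernel $h_k(x_1,\ldots,x_{d+1})=I(\theta_k(a)\in\mathrm{co}(\theta_k(x_1),\ldots,\theta_k(x_{d+1})))$ and mean $\lambda(a)$; the strong law of large numbers for U-statistics then gives $Z_{n,k}(a)/N_{n,d}\to\lambda(a)$ almost surely.

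The heart of the matter is (\ref{3.39}), which amounts to showing that, with probability one, $\inf_{k\ge 1}Z_{n,k}(a)/N_{n,d}=0$ for every $n$. Fix $n$ and put $A_k=\{Z_{n,k}(a)=0\}$. I would establish two facts. First, the $A_k$ are independent across $k$: the event $A_k$ is measurable with respect to $\{\eta_{i,j}:(k-1)d+1\le i\le kd,\ 1\le j\le n\}$, and these index blocks are disjoint in $i$ while the whole array $\{\eta_{i,j}\}$ is mutually independent, so distinct $A_k$ depend on independent families; identical distribution is clear since each block of $n$ points is an i.i.d.\ $Q$-sample. Second, $P(A_k)=:p>0$. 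For this I would separate the fixed point $(a_1,\ldots,a_d)$ from the cloud by a coordinate hyperplane. Because $\eta$ has a density, $P(\eta=a_1)=0$, so at least one of $P(\eta>a_1)$, $P(\eta<a_1)$ is strictly positive; assuming the former, on the positive-probability event that the first coordinate of each of the $n$ points $\theta_k(X_1),\ldots,\theta_k(X_n)$ exceeds $a_1$, every convex combination of these points has first coordinate strictly larger than $a_1$, so the (open) convex hull of any subset lies in $\{x_1>a_1\}$ and cannot contain $(a_1,\ldots,a_d)$; hence $Z_{n,k}(a)=0$ on this event and $p>0$.

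With these two facts in hand, the second Borel--Cantelli lemma applies to the independent events $A_k$ with $\sum_k P(A_k)=\infty$, giving $P(A_k\ \text{i.o.})=1$ and in particular $\inf_{k\ge 1}Z_{n,k}(a)/N_{n,d}=0$ almost surely for this $n$. Intersecting over the countably many values of $n$ yields one probability-one event on which $D_{\Theta,n}(a)=\inf_k Z_{n,k}(a)/N_{n,d}=0$ for all $n$, whence $|D_{\Theta,n}(a)-D_{\Theta}(a,\mu)|=|0-\lambda(a)|=\lambda(a)$, which is (\ref{3.39}). When $\lambda(a)>0$ this deviation equals the fixed positive constant $\lambda(a)$ for every $n$ and does not tend to zero, so consistency fails.

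The one step that requires genuine care is the positivity $P(A_k)>0$: one must argue that the fixed repeated point $(a_1,\ldots,a_d)$ can always be split off from a finite $Q$-cloud, and this is exactly where the density hypothesis on $\eta$ enters (through $P(\eta=a_1)=0$, which guarantees one of the two half-lines carries positive mass). Everything else---the block independence and the two convergence statements---is routine once the separation is established.
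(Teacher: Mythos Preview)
Your proposal is correct and follows essentially the same approach as the paper: both establish (\ref{3.38}) via the U-statistic strong law and prove (\ref{3.39}) by showing $P(Z_{n,k}(a)=0)>0$ through separation of $(a_1,\ldots,a_d)$ from the sample cloud by the first-coordinate hyperplane, then invoking independence across $k$ and the second Borel--Cantelli lemma. The only cosmetic difference is that the paper bounds $P(Z_{n,k}(a)=0)$ below by the sum $P(\eta>a_1)^n+P(\eta<a_1)^n$ using both half-lines, while you pick whichever half-line has positive mass; either suffices.
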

\bigskip

\begin{rem} For the proof of the proposition we only need that $\eta$ does not have an atom of size one, but when using the open convex hull in the definition of simplicial depth something close to absolute continuity is needed to have $\lambda(a)>0$ for a large collection of points. Furthermore, it is also important in the proofs of various useful  properties of the simplicial depth in $\mathbb{R}^d$ for $d\ge 2$. For example, see the results in \cite{liu}. Finally, if the integer $d\ge 2$ is fixed, then essentially the same proof provides an analogous result provided we take $\eta,\eta_1,\ldots$ to be $\mathbb{R}^d$ valued random vectors with absolutely continuous distribution.
\end{rem}
\bigskip

{\bf Proof.} Since $\theta_k(a)=(a_1,\cdots,a_d)$ for $ k \ge 1,$ the independence structure we have assumed in $n$ and $k$ implies that (\ref{3.38}) follows from the law of large numbers for $U$-statistics, see, for example, \cite{H}, or Theorem 4.1.4 in \cite{dePG}.
Moreover,
$$
\{Z_{n,k}(a)=0\}= \{ \sum_{J_{n,d}}I(\theta_k(a) \notin {\rm{co}}(\theta_k(X_{j_1}),\ldots,\theta_k(X_{j_{d+1}})))= N_{n,d}\},
$$
where
$$
I(\theta_k(a) \notin {\rm{co}}(\theta_k(X_{j_1}),\ldots,\theta_k(X_{j_{d+1}})))
$$
$$
=I(\theta_k(a) \notin {\rm{co}}((\eta_{i_{k} +1,j_1},\ldots,\eta_{i_{k}+1, j_{d+1}}),\ldots,(\eta_{i_{k+1},j_1},\ldots, \eta_{i_{k+1},j_{d+1}}))).
$$
Since $\theta_k(a)=(a_1,\cdots,a_d)$ we therefore have 
$$
\{Z_{n,k}(a)=0\} \supseteq A_{1,n,k} \cup A_{2,n,k},
$$
where
$$
A_{1,n,k}= \{ \sum_{J_{n,d}}I(a_1<\eta_{i_{k} +1,j_1},\ldots,a_1< \eta_{i_{k}+1, j_{d+1}})= N_{n,d}\},
$$
and 
$$
A_{2,n,k}= \{ \sum_{J_{n,d}}I(a_1>\eta_{i_{k} +1,j_1},\ldots,a_1> \eta_{i_{k}+1, j_{d+1}})= N_{n,d}\}.
$$
Now
$$
A_{1,n,k}= \{ a_1<\eta_{i_{k} +1,1},\ldots,a_1< \eta_{i_{k}+1,n}\},
$$
and 
$$
A_{2,n,k}= \{ a_1>\eta_{i_{k} +1,1},\ldots,a_1> \eta_{i_{k}+1,n}\},
$$
and hence
$$
P(A_{1,n,k})=P(\eta>a_1)^n { \rm~{and}}~P(A_{2,n,k})=P(\eta<a_1)^n.
$$
Since $\eta$ has a continuous distribution function $P(\eta<a_1)+P(\eta>a_1)=1$ and hence
\begin{align}\label{3.40}
P(Z_{n,k}(a)=0)\ge P(\eta>a_1)^n + P(\eta<a_1)^n>0, 
\end{align}
Applying the Borel-Cantelli lemma with $n \ge 1$ fixed, the independence in $k $ and (\ref{3.40})  implies
\begin{align}\label{3.41}
P(Z_{n,k}(a)=0 ~{\rm {i.o.~in~k}})=1.
\end{align}
and hence $P(\inf_{k \ge 1}Z_{n,k}(a)=0)=1$.
Thus (\ref{3.41}) implies (\ref{3.39}) with probability one, which proves the proposition.
\bigskip

\noindent {\bf Acknowledgement:} It is a pleasure to thank the referees for a careful reading of the manuscript. Their comments and suggestions led to a number of improvements in the exposition.

\providecommand{\bysame}{\leavevmode\hbox to3em{\hrulefill}\thinspace}
\providecommand{\MR}{\relax\ifhmode\unskip\space\fi MR }
\providecommand{\MRhref}[2]{%
  \href{http://www.ams.org/mathscinet-getitem?mr=#1}{#2}
}
\providecommand{\href}[2]{#2}

\end{document}